\documentclass{amsart}

\usepackage{amsmath,amssymb,amsthm,latexsym}

\newtheorem{lemma}{Lemma}
\newtheorem{theorem}{Theorem}

\theoremstyle{definition}
\newtheorem*{remark}{Remark}

\begin{document}

\title{Enveloping algebras of Malcev algebras}

\author[Bremner]{Murray R. Bremner}

\address{Department of Mathematics and Statistics, University of Saskatchewan,
Canada}

\email{bremner@math.usask.ca}

\author[Hentzel]{Irvin R. Hentzel}

\address{Department of Mathematics, Iowa State University, USA}

\email{hentzel@iastate.edu}

\author[Peresi]{Luiz A. Peresi}

\address{Department of Mathematics, University of S\~ao Paulo, Brazil}

\email{peresi@ime.usp.br}

\author[Tvalavadze]{Marina V. Tvalavadze}

\address{Department of Mathematics and Statistics, University of Saskatchewan,
Canada}

\email{tvalavadze@math.usask.ca}

\author[Usefi]{Hamid Usefi}

\address{Department of Mathematics, University of British Columbia, Canada}

\email{usefi@math.ubc.ca}

\thanks{This is a revised and expanded version of the survey talk given by the first
author at the Second Mile High Conference on Nonassociative Mathematics at the
University of Denver (Denver, Colorado, USA, June 22 to 26, 2009)}

\begin{abstract}
We first discuss the construction by P\'erez-Izquierdo and Shestakov of universal
nonassociative enveloping algebras of Malcev algebras.  We then describe recent
results on explicit structure constants for the universal enveloping algebras (both
nonassociative and alternative) of the 4-dimensional solvable Malcev algebra and
the 5-dimensional nilpotent Malcev algebra.  We include a proof (due to Shestakov)
that the universal alternative enveloping algebra of the real 7-dimensional simple
Malcev algebra is isomorphic to the 8-dimensional division algebra of real octonions.
We conclude with some brief remarks on tangent algebras of analytic Bol loops and
monoassociative loops.
\end{abstract}

\maketitle

\section{Introduction}

Moufang-Lie algebras were introduced by Malcev \cite{Malcev} as the tangent
algebras of analytic Moufang loops. These structures were given the name
Malcev algebras by Sagle \cite{Sagle}.  Thus Malcev algebras are related to
alternative algebras in the same way that Lie algebras are related to associative
algebras.

In 2004, P\'erez-Izquierdo and Shestakov \cite{PerezIzquierdoShestakov}
extended the famous PBW theorem (Poincar\'e-Birkhoff-Witt) from Lie algebras to
Malcev algebras. For any Malcev algebra $M$, over a field of characteristic
$\ne 2, 3$, they constructed a universal nonassociative enveloping algebra
$U(M)$, which shares many properties of the universal associative enveloping
algebras of Lie algebras: $U(M)$ is linearly isomorphic to the polynomial
algebra $P(M)$ and has a natural (nonassociative) Hopf algebra structure. We
will begin by describing the construction of $U(M)$. We will then show how this
construction can be used to compute explicit structure constants for enveloping
algebras of low-dimensional Malcev algebras. This involves differential
operators on $P(M)$, which is the associated graded algebra of $U(M)$, and
derivations defined by two elements of $N_\mathrm{alt}(M)$, the generalized
alternative nucleus of $U(M)$.

Since in general $U(M)$ is not alternative, it is interesting to calculate its
alternator ideal $I(M)$ and its maximal alternative quotient $A(M) =
U(M)/I(M)$, which is the universal alternative enveloping algebra of $M$. This
produces new examples of infinite dimensional alternative algebras.

This program has been carried out for the 4-dimensional solvable Malcev
algebra and for the 5-dimensional nilpotent Malcev algebra. It is work in
progress for the one-parameter family of 5-dimensional solvable (non-nilpotent)
Malcev algebras, and the 5-dimensional non-solvable Malcev algebra. The last
algebra is especially interesting since it is the split extension of the simple
3-dimensional Lie algebra by its unique irreducible non-Lie module.

The ultimate goal of this research program is to calculate the structure constants
for $U(\mathbb{M})$ where $\mathbb{M}$ is the 7-dimensional simple Malcev
algebra.  It is known that in this case the universal alternative enveloping algebra
is just the octonion algebra; see Section \ref{shestakovproof} for a proof.

\section{Lie algebras and Malcev algebras}

Recall that a Lie algebra is a vector space $L$ over a field $F$ with a
bilinear product $[a,b]$ satisfying anticommutativity and the Jacobi identity
for all $a, b, c \in L$:
  \[
  [a,a] = 0,
  \qquad
  J(a,b,c) = 0,
  \qquad
  J(a,b,c) = [[a,b],c] + [[b,c],a] + [[c,a],b].
  \]
If $A$ is an associative algebra over $F$ then we obtain a Lie algebra $A^-$ by
keeping the same underlying vector space but replacing the associative product
$ab$ by the commutator $[a,b] = ab - ba$.  Conversely, the PBW theorem implies
that every Lie algebra $L$ has a universal associative enveloping algebra
$U(L)$ for which the map $L \to U(L)$ is injective, so that $L$ is isomorphic
to a subalgebra of $U(L)^-$. If $\mathrm{char}\,F = 0$ then the image of $L$ in
$U(L)$ consists exactly of the elements of $U(L)$ which are primitive (in the Hopf
algebra sense) with respect to the coproduct $\Delta\colon U(L) \to U(L) \otimes U(L)$.

A Malcev algebra is a vector space $M$ over a field $F$ with a bilinear product
$[a,b]$ satisfying anticommutativity and the Malcev identity for all $a, b, c
\in M$:
  \[
  [a,a] = 0,
  \qquad
  [J(a,b,c),a] = J(a,b,[a,c]).
  \]
It is clear that every Lie algebra is a Malcev algebra.

Associative algebras are defined by the identity $(a,b,c) = 0$ where $(a,b,c) =
(ab)c - a(bc)$ is the associator.  We can weaken this condition by requiring
only that the associator should be an alternating function of its arguments.
This gives the left and right alternative identities, defining the variety of
alternative algebras: $(a,a,b) = 0$ and $(b,a,a) = 0$. If $A$ is an alternative
algebra over $F$ then we obtain a Malcev algebra $A^-$ by keeping the same
underlying vector space but replacing the alternative product $ab$ by the
commutator $[a,b] = ab - ba$. It is an open problem whether every Malcev
algebra $M$ has a universal alternative enveloping algebra $A(M)$ for which the
map $M \to A(M)$ is injective.  In other words, it is not known whether every
Malcev algebra is special: that is, isomorphic to a subalgebra of $A^-$ for some
alternative algebra $A$.

A solution to a closely related problem was given in 2004 by P\'erez-Izquierdo
and Shestakov \cite{PerezIzquierdoShestakov}: they constructed universal
nonassociative enveloping algebras for Malcev algebras. This seems to be the
natural generalization of the PBW theorem to Malcev algebras, since the
universal nonassociative enveloping algebra $U(M)$ of a Malcev algebra $M$ has
a natural (nonassociative) Hopf algebra structure and an associated graded algebra
which is a (commutative associative) polynomial algebra.

\section{Theorem of P\'erez-Izquierdo and Shestakov}

For a nonassociative algebra $A$ we define the generalized alternative nucleus
by
  \[
  N_{\mathrm{alt}}(A)
  =
  \big\{ \,
  a \in A
  \mid
  (a,b,c) = -(b,a,c) = (b,c,a), \, \forall \, b, c \in A
  \, \big\}.
  \]
In general $N_{\mathrm{alt}}(A)$ is not a subalgebra of $A$ but it is closed
under the commutator (it is a subalgebra of $A^-$) and is a Malcev algebra.

\begin{theorem}
\emph{(P\'erez-Izquierdo and Shestakov \cite{PerezIzquierdoShestakov})} For
every Malcev algebra $M$ over a field $F$ of characteristic $\ne 2, 3$ there
exists a nonassociative algebra $U(M)$ and an injective algebra morphism
$\iota\colon M \to U(M)^-$ such that $\iota(M) \subseteq
N_{\mathrm{alt}}(U(M))$; furthermore, $U(M)$ is a universal object with respect
to such morphisms.
\end{theorem}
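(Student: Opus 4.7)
The plan is to build $U(M)$ as a quotient of a free nonassociative algebra by relations that force the Malcev bracket to become the commutator and force the image of $M$ into the generalized alternative nucleus, and then to show this quotient has the expected PBW-type size by modelling it concretely on the polynomial algebra $P(M)$. Concretely, I would let $F\{M\}$ denote the free unital nonassociative algebra on a basis of $M$ and form the two-sided ideal $J$ generated, for all $x, y \in M$ and all $b, c \in F\{M\}$, by
\[
xy - yx - [x,y]_M,
\qquad
(x,b,c) + (b,x,c),
\qquad
(b,x,c) - (b,c,x).
\]
Setting $U(M) := F\{M\}/J$ and letting $\iota \colon M \to U(M)$ be induced by the inclusion of generators, the first two requirements of the theorem are built in by design: $\iota$ is a morphism $M \to U(M)^-$, and $\iota(M) \subseteq N_{\mathrm{alt}}(U(M))$. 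The universal property is then immediate from freeness, since any $\phi \colon M \to A^-$ with $\phi(M) \subseteq N_{\mathrm{alt}}(A)$ extends uniquely from generators to $F\{M\}$ and kills $J$ by the very form of its generators.

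The real content is injectivity of $\iota$. My approach would be a PBW-style argument: fix a totally ordered basis $\{e_i\}$ of $M$, consider the left-normed monomials $e_{i_1}(e_{i_2}(\cdots e_{i_k})\cdots)$ with $i_1 \le \cdots \le i_k$, and use the defining relations of $J$ to rewrite arbitrary nonassociative monomials as $F$-linear combinations of such straightened ones, so that they span $U(M)$. To see that they are actually linearly independent, I would build a concrete model on $P(M)$: for each $x \in M$, define first-order differential operators $L_x, R_x$ on $P(M)$ using the multiplication and the bracket of $M$, and use these to put a nonassociative product on $P(M)$ that satisfies the defining relations of $U(M)$. This would produce a surjection $U(M) \twoheadrightarrow P(M)$ inverse to the straightening map, giving $U(M) \cong P(M)$ as vector spaces and in particular forcing $\iota$ to be injective.

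The hard part is verifying that this candidate product on $P(M)$ is well-defined and really places $M$ inside its generalized alternative nucleus. This is where the Malcev identity becomes indispensable: one must check that for $x \in M$ and $b, c \in P(M)$ the associator $(x,b,c)$ is alternating in $x$ against the other two arguments and invariant under the cyclic shift sending it to $(b,c,x)$, and these identities must be propagated by induction on the total degree of $b$ and $c$. Exactly as the Jacobi identity powers the coherence check in the classical PBW theorem, here the Malcev identity (together with the exclusion of characteristics $2$ and $3$, which clears denominators in the relevant associator computations) is precisely what makes the operators $L_x, R_x$ compatible and the straightened monomials a genuine basis. Once that coherence is established, injectivity of $\iota$ and the linear isomorphism $U(M) \cong P(M)$ drop out together, and with them the full theorem.
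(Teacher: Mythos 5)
Your construction of $U(M)$ as a quotient of the free unital nonassociative algebra, and your disposal of the universal property, are exactly what the paper does (it sets $U(M) = F(M)/R(M)$ with the same first two generators). But note a sign slip that matters: the condition $a \in N_{\mathrm{alt}}$ reads $(a,b,c) = -(b,a,c) = (b,c,a)$, which forces $(b,a,c) = -(b,c,a)$, so the third generator must be $(b,x,c) + (b,c,x)$ as in the paper, not $(b,x,c) - (b,c,x)$. With your sign the quotient imposes $(x,b,c) = -(b,c,x)$, which contradicts the cyclic invariance you invoke later and does \emph{not} place $\iota(M)$ in $N_{\mathrm{alt}}(U(M))$ (in characteristic $\ne 2$ it would instead force $(b,x,c)=0$ together with the nucleus condition). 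So even the ``built in by design'' half of the theorem fails for the ideal as you wrote it.

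The more serious gap is in the injectivity/PBW half, which is the entire content of the theorem. You correctly locate the difficulty in a coherence check on a $P(M)$-model powered by the Malcev identity, but you never define the operators $L_x, R_x$ --- ``first-order differential operators using the multiplication and the bracket of $M$'' is not a definition, since unlike the Lie case the left and right multiplications by elements of $M$ on higher-degree monomials are determined only through nontrivial recursions. These recursions are precisely the formulas recorded as Lemmas 1--3 of the paper, e.g.
\[
[by,a] = [b,a]y + b[y,a] + \tfrac12[[y,a],b] - \tfrac12[[y,b],a] - \tfrac12[y,[a,b]],
\]
which follow from the nucleus relations via $(a,b,x) = \tfrac16\bigl([[x,a],b] - [[x,b],a] - [x,[a,b]]\bigr)$; the denominators $6$ and $3$ here are the actual reason characteristic $\ne 2,3$ is excluded, not a generic ``clearing of denominators.'' The induction showing that these recursions are mutually consistent --- that straightening a monomial in different orders gives the same answer, which is where the Malcev identity is genuinely used --- is the substantial computation of P\'erez-Izquierdo and Shestakov, and your proposal asserts that it works rather than carrying it out. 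As written, the argument is a correct outline of the strategy but not a proof.
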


In general $U(M)$ is not alternative but it has a monomial basis of PBW type; over
a field of characteristic 0, the elements $\iota(M)$ are exactly the primitive elements
of $U(M)$ with respect to the coproduct $\Delta\colon U(M) \to U(M) \otimes U(M)$.

Let $F(M)$ be the unital free nonassociative algebra on a basis of $M$. Let
$R(M)$ be the ideal of $F(M)$ generated by the elements
  \[
  ab - ba - [a,b],
  \qquad
  (a,x,y) + (x,a,y),
  \qquad
  (x,a,y) + (x,y,a),
  \]
for all $a, b \in M$ and all $x, y \in F(M)$. Define $U(M) = F(M)/R(M)$ with
the natural mapping
  \[
  \iota\colon M \to N_\mathrm{alt}(U(M)) \subseteq U(M),
  \qquad
  a \mapsto \iota(a) = \overline{a} = a + R(M).
  \]
Since $\iota$ is injective, we can identify $M$ with $\iota(M) \subseteq U(M)$.
We fix a basis $B = \{ a_i \,|\, i \in \mathcal{I} \}$ of $M$ and a total order
$<$ on $\mathcal{I}$. Define
  \[
  \Omega =
  \{ \,
  (i_1,\hdots,i_n)
  \mid
  n \ge 0, \,
  i_1 \le \cdots \le i_n \,
  \}.
  \]
For $n = 0$ we have $\overline{a}_\emptyset = 1 \in U(M)$, and for $n \ge 1$
the $n$-tuple $(i_1,\hdots,i_n) \in \Omega$ defines a left-tapped monomial
  \[
  \overline{a}_I =
  \overline{a}_{i_1} (
  \overline{a}_{i_2} ( \cdots
  ( \overline{a}_{i_{n-1}} \overline{a}_{i_n} )
  \cdots )),
  \qquad
  |\overline{a}_I| = n.
  \]
In P\'erez-Izquierdo and Shestakov \cite{PerezIzquierdoShestakov} it is shown
that the set of all $\overline{a}_I$ for $I \in \Omega$ is a basis of $U(M)$.

We write $P(M)$ for the polynomial algebra on the vector space $M$.  It follows
that there is a linear isomorphism
  \[
  \phi\colon U(M) \to P(M),
  \qquad
  \overline{a}_{i_1}
  (
  \overline{a}_{i_2}
  ( \cdots
  (
  \overline{a}_{i_{n-1}} \overline{a}_{i_n}
  )
  \cdots ))
  \mapsto
  a_{i_1}
  a_{i_2}
  \cdots
  a_{i_{n-1}} a_{i_n}.
  \]
We now define linear mappings
  \[
  \rho\colon U(M) \to \mathrm{End}_F P(M),
  \qquad
  \lambda\colon U(M) \to \mathrm{End}_F P(M).
  \]
For $x \in U(M)$ we write $\rho(x)$, respectively $\lambda(x)$, for the linear
operator on $P(M)$ induced by the right bracket $y \mapsto [y,x]$ in $U(M)$,
respectively the left multiplication $y \mapsto xy$ in $U(M)$:
  \[
  \rho(x)(f) = \phi \big( [ \phi^{-1}(f), x ] \big),
  \qquad
  \lambda(x)(f) = \phi \big( x \phi^{-1}(f) \big).
  \]
We use the linear operators $\rho(x)$ and $\lambda(x)$ to express commutation
and multiplication in $U(M)$ in terms of differential operators on the
polynomial algebra $P(M)$.

Since $M \subseteq N_\mathrm{alt}(U(M))$, for any $a, b \in M$ and $x \in U(M)$
we have
  \[
  (a,b,x)
  =
  \tfrac16
  [[x,a],b]
  -
  \tfrac16
  [[x,b],a]
  -
  \tfrac16
  [x,[a,b]].
  \]
From this follow the next three lemmas, which are implicit in P\'erez-Izquierdo and
Shestakov \cite{PerezIzquierdoShestakov}.

\begin{lemma}
Let $x$ be a basis monomial of $U(M)$ with $|x| \ge 2$ and write $x = by$ with
$b \in M$.  Then for any $a \in M$ we have
  \[
  [x,a]
  =
  [by,a]
  =
  [b,a]y
  +
  b[y,a]
  +
  \tfrac12
  [[y,a],b]
  -
  \tfrac12
  [[y,b],a]
  -
  \tfrac12
  [y,[a,b]].
  \]
\end{lemma}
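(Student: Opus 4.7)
The plan is to expand $[by,a] = (by)a - a(by)$ by reassociating each product, replace the occurrences of $ab$ and $ya$ by their commutator-corrected forms, and then collapse the associators that appear into a single multiple of $(a,b,y)$ using $a,b\in M\subseteq N_{\mathrm{alt}}(U(M))$. At that point the already-displayed identity $(a,b,x) = \tfrac{1}{6}[[x,a],b] - \tfrac{1}{6}[[x,b],a] - \tfrac{1}{6}[x,[a,b]]$ converts the associator into exactly the commutator terms on the right-hand side.

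Concretely, I would first rewrite $(by)a = b(ya) + (b,y,a)$ and $a(by) = (ab)y - (a,b,y)$. Then I would push $a$ inward in each summand: on the left, $ya = ay + [y,a]$ in $U(M)^-$ gives $b(ya) = b(ay) + b[y,a]$; on the right, the defining relation $ab - ba = [a,b]$ (valid because $a,b\in M$) combined with $(ba)y = b(ay) + (b,a,y)$ gives $(ab)y = b(ay) + (b,a,y) + [a,b]y$. Subtracting, the $b(ay)$ terms cancel and what remains is
\[
[by,a] = [b,a]y + b[y,a] + (b,y,a) + (a,b,y) - (b,a,y).
\]

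For the final step, I would invoke the nucleus symmetries: since $a\in N_{\mathrm{alt}}(U(M))$ we have $(a,b,y) = -(b,a,y) = (b,y,a)$, so the three associator terms collapse to $3(a,b,y)$. Substituting the displayed formula for $(a,b,y)$ then yields precisely the $\tfrac{1}{2}$-coefficients in the claimed identity.

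The main obstacle is not conceptual but organizational: $y$ is an arbitrary element of $U(M)$, not of $M$, so the Malcev bracket cannot be used to commute $a$ past $y$; every exchange leaves a $U(M)^-$ commutator, and every reassociation leaves an associator whose three slots must be arranged consistently before the nucleus identities can be applied to unify them. Once the bookkeeping is done so that $(a,b,y)$, $(b,y,a)$, and $(b,a,y)$ appear in that precise order, the rest is mechanical.
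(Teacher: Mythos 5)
Your proof is correct and is essentially the derivation the paper has in mind: the paper gives no explicit proof, stating only that the lemma follows from the displayed identity $(a,b,x)=\tfrac16[[x,a],b]-\tfrac16[[x,b],a]-\tfrac16[x,[a,b]]$, and your expansion of $(by)a-a(by)$ into $[b,a]y+b[y,a]+3(a,b,y)$ via the nucleus symmetries is exactly the intended route. All the reassociations, signs, and the final coefficient $3\cdot\tfrac16=\tfrac12$ check out.
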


\begin{lemma}
Let $x$ be a basis monomial of $U(M)$ with $|x| \ge 2$ and write $x = by$ with
$b \in M$.    Then for any $a \in M$ we have
  \[
  ax
  =
  a(by)
  =
  b(ay)
  +
  [a,b]y
  -
  \tfrac13
  [[y,a],b]
  +
  \tfrac13
  [[y,b],a]
  +
  \tfrac13
  [y,[a,b]].
  \]
\end{lemma}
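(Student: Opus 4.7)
The plan is to derive the formula directly from the key identity displayed just before the previous lemma, namely
\[
(a,b,y) = \tfrac16[[y,a],b] - \tfrac16[[y,b],a] - \tfrac16[y,[a,b]],
\]
which holds for all $a, b \in M$ and all $y \in U(M)$ because $M \subseteq N_{\mathrm{alt}}(U(M))$. This is the same identity that drives the preceding lemma; here I will use it to control left multiplication rather than right commutation.

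First I expand the two associators $(a,b,y)$ and $(b,a,y)$ by the definition of the associator:
\[
(a,b,y) = (ab)y - a(by), \qquad (b,a,y) = (ba)y - b(ay).
\]
Subtracting, and using the defining relation $ab - ba = [a,b]$ of $U(M)$, gives
\[
a(by) - b(ay) = [a,b]y - (a,b,y) + (b,a,y).
\]
Since $a \in M \subseteq N_{\mathrm{alt}}(U(M))$, the associator alternates in its first two slots, so $(b,a,y) = -(a,b,y)$, and hence
\[
a(by) = b(ay) + [a,b]y - 2(a,b,y).
\]

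Finally I substitute the displayed formula for $(a,b,y)$, collect the coefficient $-2 \cdot (\pm\tfrac16) = \mp\tfrac13$ in front of each of the three bracket terms, and recover exactly the right-hand side claimed in the lemma. There is essentially no obstacle here: once the identity for $(a,b,y)$ from $M \subseteq N_{\mathrm{alt}}(U(M))$ is in hand, the derivation is a two-line manipulation whose only real difficulty is sign bookkeeping.
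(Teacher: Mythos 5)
Your derivation is correct and is exactly the route the paper intends: the paper states that this lemma ``follows from'' the displayed identity $(a,b,x) = \tfrac16[[x,a],b] - \tfrac16[[x,b],a] - \tfrac16[x,[a,b]]$ without writing out the details, and your two-line computation (expanding $(a,b,y)-(b,a,y)$, using $ab-ba=[a,b]$ and the skew-symmetry $(b,a,y)=-(a,b,y)$ from $a\in N_{\mathrm{alt}}(U(M))$, then substituting) supplies precisely that omitted derivation with the signs handled correctly.
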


\begin{lemma}
Let $y$ and $z$ be basis monomials of $U(M)$ with $|y| \ge 2$. Write $y = ax$
with $a \in M$. Then
  \[
  yz = (ax)z = 2 a(xz) - x(az) - x[z,a] + [xz,a].
  \]
\end{lemma}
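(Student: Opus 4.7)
The plan is to exploit the hypothesis that $a \in M \subseteq N_{\mathrm{alt}}(U(M))$, so the associator $(a,u,v)$ is an alternating function of its three arguments for all $u,v \in U(M)$. In particular, the cyclic equality $(a,u,v) = (u,v,a)$ applied with $u = x$ and $v = z$ should produce a relation between $(ax)z$ and $a(xz)$ on the left and $(xz)a$ and $x(za)$ on the right. This is the only nontrivial input; everything else is commutator bookkeeping.

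First I would expand both associators explicitly to get
\[
(ax)z - a(xz) = (xz)a - x(za),
\]
and then solve for the target quantity, obtaining $(ax)z = a(xz) + (xz)a - x(za)$. The remaining task is to rewrite the two right-multiplications by $a$ in terms of left-multiplications, using the identity $ua - au = [u,a]$ that holds in $U(M)^-$ for every $u \in U(M)$.

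Applying this once in the form $(xz)a = a(xz) + [xz,a]$, and separately as $za = az + [z,a]$ which, after left-multiplying by $x$, gives $x(za) = x(az) + x[z,a]$, and substituting back should collapse the right-hand side to exactly $2a(xz) - x(az) - x[z,a] + [xz,a]$, matching the stated formula.

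I do not anticipate a real obstacle: the argument is a single application of the defining property of $N_{\mathrm{alt}}$ followed by routine commutator substitutions, and no induction on $|y|$ or $|z|$ is required. The only subtle point worth emphasizing is that the alternating-associator property is available when $a$ is paired with arbitrary elements of $U(M)$ (not just elements of $M$), which is precisely what membership in the generalized alternative nucleus provides; this is what lets us apply it to $x$ and $z$, which are general basis monomials rather than generators.
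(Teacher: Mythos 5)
Your proof is correct and is exactly the intended derivation: the paper gives no explicit proof of this lemma (it notes only that it follows from $M \subseteq N_{\mathrm{alt}}(U(M))$), and the cyclic associator identity $(a,x,z)=(x,z,a)$ followed by the commutator substitutions $(xz)a = a(xz)+[xz,a]$ and $x(za)=x(az)+x[z,a]$ yields the stated formula precisely. Your closing remark correctly identifies the one essential point, namely that the alternating property holds with $a$ against arbitrary elements of $U(M)$, which is what membership in the generalized alternative nucleus supplies.
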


We can use a result of Morandi, P\'erez-Izquierdo and Pumpl\"un
\cite{MorandiPerezIzquierdoPumplun} to express these lemmas in terms of
derivations.  Let $A$ be a nonassociative algebra, and let $a, b \in
N_\mathrm{alt}(A)$ be any two elements of the generalized alternative nucleus.
As usual we define the following operators on $A$:
  \[
  L_a(x) = ax,
  \qquad
  R_a(x) = xa,
  \qquad
  \mathrm{ad}_a(x) = [a,x].
  \]
Then the following operator is a derivation of $A$:
  \[
  D_{a,b}
  =
  [ L_a, L_b ] + [ L_a, R_b ] + [ R_a, R_b ]
  =
  \mathrm{ad}_{[a,b]} - 3 [ L_a, R_b ]
  =
  \tfrac12 \mathrm{ad}_{[a,b]} + \tfrac12 [ \mathrm{ad}_{a},\mathrm{ad}_{b} ].
  \]
To apply this to the universal enveloping algebra of a Malcev algebra $M$, we
take $A = U(M)$ and $a, b \in M \subseteq N_\mathrm{alt}(A)$. It is clear that
$D_{a,a} = 0$ for all $a$, and that $D_{b,a} = - D_{a,b}$ for all $a, b$. For
any $a, b \in M$ and any $x \in U(M)$ we have
  \[
  6 (b,x,a)
  =
  - 2 D_{a,b}(x) - 2 [x,[a,b]].
  \]
Using this, we can rewrite the first two lemmas as follows:
  \allowdisplaybreaks
  \begin{align*}
  [x,a]
  &=
  [b,a]y + b[y,a] - D_{a,b}(y) - [y,[a,b]],
  \\
  ax
  &=
  b(ay) + [a,b]y + \tfrac23 D_{a,b}(y) + \tfrac23 [y,[a,b]].
  \end{align*}
These equations allow us to inductively build up the structure constants for
the universal nonassociative enveloping algebra $U(M)$.

\section{The 4-dimensional Malcev algebra}

The first application of the work of P\'erez-Izquierdo and Shestakov to the
computation of explicit structure constants for $U(M)$ was done by Bremner,
Hentzel, Peresi and Usefi \cite{BremnerHentzelPeresiUsefi} in the case of the
4-dimensional Malcev algebra.  We write $\mathbb{S}$ for this algebra; it is
the smallest non-Lie Malcev algebra, and it is solvable.  It has basis $\{ a,
b, c, d\}$ with structure constants in Table \ref{table1}.

  \begin{table}[ht]
  \begin{center}
  \begin{tabular}{r|rrrr}
  $[{-},{-}]$ & $ a$ & $  b$ & $ c$ & $\phantom{-}d$ \\
  \hline
  &&&& \\[-10pt]
  $a$   & $ 0$ & $ -b$ & $-c$ & $d$ \\
  $b$   & $ b$ & $  0$ & $2d$ & $0$ \\
  $c$   & $ c$ & $-2d$ & $ 0$ & $0$ \\
  $d$   & $-d$ & $  0$ & $ 0$ & $0$
  \end{tabular}
  \end{center}
  \caption{The 4-dimensional Malcev algebra $\mathbb{S}$}
  \label{table1}
  \end{table}

In $\mathbb{S}$ the sets $\{ a, b \}$, $\{ a, c \}$, $\{ a, d \}$ span
2-dimensional solvable Lie subalgebras, and the set $\{ b, c, d \}$ spans a
3-dimensional nilpotent Lie subalgebra.  Therefore these sets generate
associative subalgebras of $U(\mathbb{S})$ with the following structure
constants.

\begin{lemma}
For $e \in \{ b, c \}$ these equations hold in $U(\mathbb{S})$:
  \[
  ( a^i e^j ) ( a^k e^\ell )
  =
  a^i ( a {+} j )^k e^{j+\ell},
  \quad
  ( a^i d^j ) ( a^k d^\ell )
  =
  a^i ( a {-} j )^k d^{j+\ell}.
  \]
\end{lemma}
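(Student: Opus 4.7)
The paragraph preceding the lemma observes that the pairs $\{a,b\}$, $\{a,c\}$, $\{a,d\}$ each span a $2$-dimensional solvable Lie subalgebra of $\mathbb{S}$, and hence generate associative subalgebras of $U(\mathbb{S})$. Inside such a subalgebra the bracketing of products is immaterial, so monomials like $a^i e^j$ are unambiguous and associativity may be used freely. The strategy is therefore to derive a ``move $a$ past $e$'' commutation rule in each of the three associative subalgebras, and then combine two such monomials by associativity.

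First I would read off from Table~\ref{table1} the commutators $[a,b]=-b$, $[a,c]=-c$, $[a,d]=d$, which in the relevant associative subalgebras translate to $ea=(a+1)e$ for $e\in\{b,c\}$ and $da=(a-1)d$. An easy induction on $j$ then yields the sliding formulas $e^j a=(a+j)e^j$ and $d^j a=(a-j)d^j$: for $e\in\{b,c\}$,
\[
e^{j+1}a = e(e^j a) = e\bigl((a+j)e^j\bigr) = (ea)e^j + j\, e^{j+1} = (a+j+1)e^{j+1},
\]
and analogously for $d$. A second induction on $k$, still inside the two-generated associative subalgebra, upgrades this to $e^j a^k=(a+j)^k e^j$ and $d^j a^k=(a-j)^k d^j$, where $(a\pm j)^k$ is to be expanded as a polynomial in $a$.

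Putting the pieces together, associativity gives
\[
(a^i e^j)(a^k e^\ell) = a^i (e^j a^k) e^\ell = a^i (a+j)^k e^{j+\ell},
\]
and the $d$-case is identical with a sign change. The only real subtlety is the associativity of the two-generated subalgebras; this rests on the preceding paragraph and ultimately on the fact that a Lie subalgebra $N$ of a Malcev algebra $M$ lifts to an associative subalgebra of $U(M)$ isomorphic to the classical associative enveloping algebra of $N$, a consequence of the defining relations of $U(M)$ together with the PBW-type basis of P\'erez-Izquierdo and Shestakov. Once this associativity is granted, the rest of the argument is routine manipulation in a two-generated associative algebra.
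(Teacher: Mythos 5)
Your proposal is correct and follows exactly the route the paper intends: the survey states the lemma as an immediate consequence of the fact that $\{a,e\}$ and $\{a,d\}$ span $2$-dimensional Lie subalgebras and hence generate associative subalgebras of $U(\mathbb{S})$ (isomorphic to the classical enveloping algebras), after which the formulas follow from the standard commutation rules $ea=(a+1)e$ and $da=(a-1)d$ by the inductions you give. Your computations check out, including the sign conventions $[e,a]=e$ for $e\in\{b,c\}$ and $[d,a]=-d$.
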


\begin{lemma}
This equation holds in $U(\mathbb{S})$:
  \[
  ( b^i c^j d^k ) ( b^\ell c^m d^n )
  =
  \sum_{h=0}^\ell
  (-1)^h 2^h \binom{\ell}{h} \frac{j!}{(j{-}h)!}
  b^{i+\ell-h} c^{j+m-h} d^{k+n+h}.
  \]
\end{lemma}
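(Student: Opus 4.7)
The plan is to work inside the associative subalgebra $S \subseteq U(\mathbb{S})$ generated by $\{b,c,d\}$; its associativity is guaranteed by the nilpotent Lie subalgebra observation made just before the lemma, and by PBW the monomials $b^i c^j d^k$ (with $i,j,k \ge 0$) form a basis of $S$. From Table \ref{table1} the only nontrivial bracket among these generators is $[b,c] = 2d$, so inside $S$ the element $d$ is central and $cb = bc - 2d$. Centrality of $d$ immediately gives
\[
(b^i c^j d^k)(b^\ell c^m d^n) = b^i\, c^j b^\ell\, c^m\, d^{k+n},
\]
so the problem reduces to straightening the middle factor $c^j b^\ell$.

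The key combinatorial identity is
\[
c^j b^\ell = \sum_{h=0}^{\min(j,\ell)} (-1)^h\, 2^h \binom{\ell}{h} \frac{j!}{(j-h)!}\, b^{\ell-h}\, c^{j-h}\, d^h. \qquad (\ast)
\]
I would establish $(\ast)$ by a two-step induction. The base $\ell = 1$ is handled by induction on $j$: from $cb = bc - 2d$ and centrality of $d$, one obtains $c^j b = b c^j - 2 j\, d\, c^{j-1}$ in one line. For the inductive step, write $c^j b^{\ell+1} = (c^j b^\ell)\, b$, apply the $\ell=1$ case term-by-term inside the sum $(\ast)$ for $c^{j-h} b$, and collect like monomials. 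The resulting equality of coefficients is a standard Pascal-type identity of the form $\binom{\ell}{h} + \binom{\ell}{h-1} = \binom{\ell+1}{h}$ (after accounting for the factor $j!/(j-h)!$, which behaves cleanly under the shift $j-h \mapsto (j-h) - 1$).

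Substituting $(\ast)$ back into the reduced product, and pulling each central $d^h$ past $c^m$, yields
\[
b^i \Big( \sum_{h} (-1)^h\, 2^h \binom{\ell}{h} \tfrac{j!}{(j-h)!}\, b^{\ell-h}\, c^{j-h}\, d^h \Big) c^m\, d^{k+n}
= \sum_{h} (-1)^h\, 2^h \binom{\ell}{h} \tfrac{j!}{(j-h)!}\, b^{i+\ell-h}\, c^{j+m-h}\, d^{k+n+h},
\]
which is the claimed identity (the summation bound $h \le \ell$ is built in by the binomial coefficient $\binom{\ell}{h}$, while $h \le j$ is enforced by $j!/(j-h)!$ vanishing for $h > j$). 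The only substantive step is the verification of $(\ast)$, which is essentially the classical Heisenberg commutation formula; I anticipate no genuine obstacle, only careful bookkeeping of binomial coefficients.
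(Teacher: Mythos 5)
Your proof is correct and follows exactly the route the paper intends: the paper gives no explicit proof of this lemma, justifying it only by the remark that $\{b,c,d\}$ spans a nilpotent (Heisenberg-type) Lie subalgebra with $[b,c]=2d$ and $d$ central, so the formula is the standard straightening computation in the associative enveloping algebra of that subalgebra, which is what you carry out. Your induction establishing the commutation identity $(\ast)$ and the Pascal-identity bookkeeping are all in order.
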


We define the following operators on the polynomial algebra $P(\mathbb{S})$:
  \begin{itemize}
  \item
$I$ is the identity;
  \item
$M_x$ is multiplication by $x \in \{a,b,c,d\}$;
  \item
$D_x$ is differentiation with respect to $x \in \{a,b,c,d\}$;
  \item
$S$ is the shift $a \mapsto a{+}1$: $S( a^i b^j c^k d^\ell ) = (a{+}1)^i b^j
c^k d^\ell$.
  \end{itemize}

\begin{lemma}
For $x, y \in \{a,b,c,d\}$ we have
  \allowdisplaybreaks
  \begin{alignat*}{3}
  &[ D_x, M_x ] = I,
  &\quad
  &[ D_x, M_y ] = 0 \; (x \ne y),
  &\quad
  &[ D_x, D_y ] = [ M_x, M_y ] = 0,
  \\
  &[ M_a, S ] = - S,
  &\quad
  &[ M_x, S ] = 0 \; (x \ne a),
  &\quad
  &[ D_x, S ] = [D_x,S^{-1}] = 0,
  \\
  &[M_a,S^{-1}] = S^{-1},
  &\quad
  &[M_x,S^{-1}] = 0 \; (x \ne a).
  \end{alignat*}
\end{lemma}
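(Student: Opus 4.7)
The plan is to verify each identity by direct action on a monomial basis $a^i b^j c^k d^\ell$ of $P(\mathbb{S})$. All of the operators involved preserve the space of polynomials in four commuting variables and are either multiplications, partial differentiations, or a shift in a single variable, so each commutator reduces to a one-line calculation.

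For the first row, the identities $[D_x,M_x] = I$, $[D_x,M_y] = 0$ ($x \neq y$), $[D_x,D_y] = 0$, and $[M_x,M_y] = 0$ are the standard Heisenberg and commutativity relations in a polynomial algebra: the first is the product rule applied to $xf$, the next two express that $D_x$ treats other variables as constants and that mixed partials commute, and the last is commutativity of the polynomial product.

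For the relations involving $S$, the key observation is that $S$ affects only the $a$-exponent: writing $f = a^i g$ with $g \in F[b,c,d]$, we have $Sf = (a+1)^i g$. From this, $S$ commutes with $M_x$ and $D_x$ for $x \in \{b,c,d\}$, because those operators act only on the $g$ factor. It also commutes with $D_a$, since $D_a S(a^i g) = i(a+1)^{i-1} g = S D_a(a^i g)$. The remaining identity $[M_a,S] = -S$ reduces on $a^i g$ to $a(a+1)^i g - (a+1)^{i+1} g = -(a+1)^i g$, which is immediate. The identities for $S^{-1}$ follow by the same argument applied to the substitution $a \mapsto a-1$, or equivalently from the general identity $[A,S^{-1}] = -S^{-1}[A,S]S^{-1}$ applied to $A = M_a$ and $A = M_x$ for $x \neq a$.

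There is no substantive obstacle here beyond bookkeeping; every identity is a one-variable verification once one separates off the factors on which $S$ and $D_x$ act trivially.
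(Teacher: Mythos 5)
Your proof is correct and is exactly the routine verification the paper has in mind (the lemma is stated without proof there, as these are standard facts about multiplication, differentiation, and shift operators on a polynomial ring). All the individual computations check out, including the identity $[A,S^{-1}]=-S^{-1}[A,S]S^{-1}$ used to transfer the $S$-relations to $S^{-1}$.
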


We can now determine $\rho(x)$ and $\lambda(x)$ for $x \in \{ a, b, c, d \}$ as
differential operators on the polynomial algebra $P(\mathbb{S})$; see Table
\ref{tableS}. The following lemma gives an example of our inductive proof techniques.

  \begin{table}[ht]
  \begin{center}
  \begin{tabular}{l|l|l}
  $x$
  &
  $\rho(x)$
  &
  $\lambda(x)$
  \\ \hline
  && \\[-10pt]
  $a$
  &
  $M_b D_b {+} M_c D_c {-} M_d D_d {-} 3 M_d D_b D_c$
  &
  $M_a$
  \\
  $b$
  &
  $( I {-} S ) M_b + \left( S {-} I {-} 2S^{-1} \right) M_d D_c$
  &
  $S M_b + \left( S^{-1} {-} S \right) M_d D_c$
  \\
  $c$
  &
  $( I {-} S ) M_c + \left( S {-} I {+} 2 S^{-1} \right) M_d D_b$
  &
  $S M_c - \left( S^{-1} {+} S \right) M_d D_b$
  \\
  $d$
  &
  $\left( I {-} S^{-1} \right) M_d$
  &
  $S^{-1} M_d$
  \end{tabular}
  \end{center}
  \caption{Differential operators $\lambda(x)$ and $\rho(x)$ for $\mathbb{S}$}
  \label{tableS}
  \end{table}

\begin{lemma}\label{bcdabracket}
We have
  $[ b^n c^p d^q, a ]
  =
  (n {+} p {-} q) b^n c^p d^q - 3np b^{n-1} c^{p-1} d^{q+1}$.
\end{lemma}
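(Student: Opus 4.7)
The plan is to prove the identity by induction on the total degree $N = n+p+q$, with Lemma~1 (the reduction formula for $[by,a]$) as the main tool. The base cases $N\le 1$ follow directly from Table~\ref{table1}: $[1,a]=0$, $[b,a]=b$, $[c,a]=c$, and $[d,a]=-d$ each match the claimed expression.

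For the inductive step I peel off the leading factor of the monomial. In the representative case $n\ge 1$, I write $b^n c^p d^q = b\cdot y$ with $y = b^{n-1}c^p d^q$ and apply Lemma~1; using $[b,a]=b=-[a,b]$, this yields
\[
[b^n c^p d^q, a] = b^n c^p d^q + b\,[y,a] + \tfrac12[[y,a],b] - \tfrac12[[y,b],a] + \tfrac12[y,b].
\]
The inductive hypothesis applied to $[y,a]$ supplies the bulk of the main term $(n+p-q)\,b^n c^p d^q$ together with $-3(n-1)p\,b^{n-1}c^{p-1}d^{q+1}$ after left-multiplying by $b$. The subcases $n=0,\,p\ge 1$ and $n=p=0,\,q\ge 1$ are handled symmetrically by peeling off a $c$ or $d$ instead; both are considerably easier since the relevant auxiliary brackets then vanish.

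The main obstacle is the bookkeeping of the three correction terms, each of which requires the auxiliary identity $[b^i c^j d^k, b] = -2j\,b^i c^{j-1}d^{k+1}$. This identity is established by a parallel and simpler induction on total degree, using only the single nontrivial bracket $[c,b]=-2d$ in the nilpotent subalgebra on $\{b,c,d\}$. Substituting it, and applying the inductive hypothesis a second time to reduce $[[y,b],a]$, produces contributions in both $b^{n-1}c^{p-1}d^{q+1}$ and $b^{n-2}c^{p-2}d^{q+2}$; the latter cancel in pairs between $\tfrac12[[y,a],b]$ and $-\tfrac12[[y,b],a]$, while the former combine to a net $-3p\,b^{n-1}c^{p-1}d^{q+1}$. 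Adding this to the $-3(n-1)p$ already obtained from $b\,[y,a]$ yields the required coefficient $-3np$, completing the induction.
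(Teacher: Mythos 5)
Your proof is correct and follows essentially the same route as the paper's: induction driven by Lemma~1 applied to $b\cdot(b^{n-1}c^pd^q)$, the auxiliary identity $[b^ic^jd^k,b]=-2j\,b^ic^{j-1}d^{k+1}$ from the nilpotent subalgebra on $\{b,c,d\}$, a second application of the inductive hypothesis inside $[[y,b],a]$, and the same cancellation of the $b^{n-2}c^{p-2}d^{q+2}$ terms leaving a net $-3p$ that combines with $-3(n-1)p$ to give $-3np$. The only organizational difference is that you induct on total degree and dispose of the $n=0$ stratum by further peeling, whereas the paper inducts on $n$ alone and settles the base case at once by observing that $a,c,d$ span a Lie subalgebra, so $[c^pd^q,a]=(p-q)\,c^pd^q$ by associativity of that subalgebra's enveloping algebra.
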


\begin{proof}
By induction on $n$.  The basis $n = 0$ is $[ c^p d^q, a ] = (p {-} q) c^p d^q$,
since $a,c,d$ span a Lie subalgebra of $\mathbb{M}$. Now let $n \ge 0$ and use
the right-bracket lemma:
  \allowdisplaybreaks
  \begin{align*}
  &
  [ b^{n+1} c^p d^q, a ]
  =
  \\
  &
  [ b, a ] b^n c^p d^q
  +
  b [ b^n c^p d^q, a ]
  +
  \tfrac12 \big(
  [ [ b^n c^p d^q, a ], b ]
  -
  [ [ b^n c^p d^q, b ], a ]
  -
  [ b^n c^p d^q, [ a, b ] ]
  \big).
  \end{align*}
We use $[a,b] = -b$ and then apply the structure constants for the nilpotent
Lie subalgebra spanned by $b,c,d$:
  \allowdisplaybreaks
  \begin{align*}
  &
  [ b^{n+1} c^p d^q, a ]
  =
  \\
  &
  b^{n+1} c^p d^q
  +
  b [ b^n c^p d^q, a ]
  +
  \tfrac12
  [ [ b^n c^p d^q, a ], b ]
  +
  p [ b^n c^{p-1} d^{q+1}, a ]
  -
  p b^n c^{p-1} d^{q+1}.
  \end{align*}
We now apply the inductive hypothesis to obtain
  \allowdisplaybreaks
  \begin{align*}
  &[ b^{n+1} c^p d^q, a ]
  =
  b^{n+1} c^p d^q
  +
  (n{+}p{-}q) b^{n+1} c^p d^q - 3np b^n c^{p-1} d^{q+1}
  \\
  &+
  \tfrac12
  (n{+}p{-}q) [ b^n c^p d^q, b ]
  -
  \tfrac32
  np [ b^{n-1} c^{p-1} d^{q+1}, b ]
  +
  (n{+}p{-}q{-}2) p
  b^n c^{p-1} d^{q+1}
  \\
  &
  -
  3 n p (p{-}1)
  b^{n-1} c^{p-2} d^{q+2}
  -
  p
  b^n c^{p-1} d^{q+1}.
  \end{align*}
We use the nilpotent Lie subalgebra again to get
  \allowdisplaybreaks
  \begin{align*}
  &[ b^{n+1} c^p d^q, a ]
  =
  b^{n+1} c^p d^q
  +
  (n{+}p{-}q) b^{n+1} c^p d^q
  -
  3np \, b^n c^{p-1} d^{q+1}
  \\
  &
  -
  (n{+}p{-}q) p b^n c^{p-1} d^{q+1}
  +
  3
  n p (p{-}1) b^{n-1} c^{p-2} d^{q+2}
  +
  (n{+}p{-}q{-}2) p b^n c^{p-1} d^{q+1}
  \\
  &
  -
  3 n p (p{-}1)
  b^{n-1} c^{p-2} d^{q+2}
  -
  p
  b^n c^{p-1} d^{q+1}.
  \end{align*}
Combining terms now gives
  \[
  [ b^{n+1} c^p d^q, a ]
  =
  (n{+}1{+}p{-}q) b^{n+1} c^p d^q - 3 (n{+}1) p b^n c^{p-1} d^{q+1},
  \]
and this completes the proof.
\end{proof}

It is not hard to show that $[ a^m b^n c^p d^q, a ] = a^m [ b^n c^p d^q, a ]$;
combining this with the last result gives the formula for $\rho(a)$ in Table
\ref{tableS}.

Our next task is to determine $\lambda(x)$ for $x = a^i b^j c^k d^\ell$ as a
differential operator on $P(\mathbb{S})$. The proofs use the fact that if linear
operators $E$ and $F$ satisfy $[[E,F],F] = 0$ then $[ E, F^k ] = k [E,F]
F^{k-1}$ for $k \ge 1$. Since $c, d$ span an abelian Lie subalgebra
$\mathbb{A} \subset \mathbb{S}$, associativity gives
$\lambda(c^k d^\ell) = \lambda(c)^k \lambda(d)^\ell$
on $U(\mathbb{A})$; this also holds on $U(\mathbb{S})$.

\begin{lemma}
In $U(\mathbb{S})$ we have $\lambda( c^k d^\ell ) = \lambda(c)^k
\lambda(d)^\ell$.
\end{lemma}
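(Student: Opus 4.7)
The identity is the operator equation $L_{c^k d^\ell} = L_c^k L_d^\ell$ on $U(\mathbb{S})$, where $L_x = \lambda(x)$. The plan is to prove it by induction on $k$ at fixed $\ell$, supported by three preliminary facts. First, restricting the product formula of the preceding lemma to the subalgebra generated by $c$ and $d$ (setting the $b$-exponents to zero) yields $(c^j d^k)(c^m d^n) = c^{j+m} d^{k+n}$, so this subalgebra is the polynomial algebra $F[c, d]$; in particular $c^{k+1} d^\ell = c \cdot (c^k d^\ell)$ in $U(\mathbb{S})$. Second, the derivation $D_{c, d}$ vanishes identically on $U(\mathbb{S})$: since $[c, d] = 0$ in $\mathbb{S}$, the expression $D_{c, d} = \tfrac{1}{2}\mathrm{ad}_{[c, d]} + \tfrac{1}{2}[\mathrm{ad}_c, \mathrm{ad}_d]$ reduces to $\tfrac{1}{2}[\mathrm{ad}_c, \mathrm{ad}_d]$, and since $D_{c, d}$ is a derivation it suffices to check $[c, [d, e]] - [d, [c, e]] = 0$ for each $e \in \{a, b, c, d\}$, a short verification using Table~\ref{table1}. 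Third, combining $D_{c, d} = 0$ with the identity $D_{c, d} = -3[L_c, R_d]$ (valid when $[c, d] = 0$) gives $[L_c, R_d] = 0$; the analogous vanishings $[L_d, R_c] = 0$ (from $D_{d, c}$) and $[L_c, R_c] = [L_d, R_d] = 0$ (flexibility, from $D_{a, a} = 0$) together ensure that $R_c$ commutes with both $L_c$ and $L_d$.

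With these in hand, the base case $k = 0$, namely $L_{d^\ell} = L_d^\ell$, is proved by a parallel induction on $\ell$:
\[
L_{d^\ell}(y) - L_d L_d^{\ell - 1}(y)
= (d, d^{\ell - 1}, y)
= (d^{\ell - 1}, y, d)
= [R_d, L_d^{\ell - 1}](y)
= 0,
\]
using $d \in N_{\mathrm{alt}}$, the inner inductive hypothesis, and $[L_d, R_d] = 0$. For the main inductive step on $k$, assuming $L_{c^k d^\ell} = L_c^k L_d^\ell$,
\[
L_{c^{k+1} d^\ell}(y) - L_c^{k+1} L_d^\ell(y)
= (c, c^k d^\ell, y)
= (c^k d^\ell, y, c)
= [R_c, L_c^k L_d^\ell](y)
= 0,
\]
where the equalities use in turn $c^{k+1} d^\ell = c \cdot (c^k d^\ell)$ together with the inductive hypothesis, then $c \in N_{\mathrm{alt}}$, then the inductive hypothesis again, and finally the commuting of $R_c$ with $L_c$ and $L_d$.

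The main obstacle is the associator $(c, c^k d^\ell, y)$: the element $c^k d^\ell$ does not lie in the original Malcev algebra $\mathbb{S}$, so $N_{\mathrm{alt}}$ alone cannot handle it while it sits in the middle slot. Pushing $c$ from the first to the third slot via $(c, x, y) = (x, y, c)$ converts the problem into a commutator $[R_c, L_c^k L_d^\ell]$ of operators that vanishes by the flexibility identity $[L_c, R_c] = 0$ and the derivation-based identity $[L_d, R_c] = 0$ coming from $D_{c, d} = 0$.
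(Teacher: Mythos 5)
Your proof is correct. The paper itself offers only a one\--sentence justification (``since $c,d$ span an abelian Lie subalgebra $\mathbb{A}$, associativity gives the identity on $U(\mathbb{A})$; this also holds on $U(\mathbb{S})$''), and the nontrivial content is precisely the part you address: why left multiplication by $c^k d^\ell$ factors as $L_c^k L_d^\ell$ when applied to arbitrary elements of $U(\mathbb{S})$, not merely to elements of the subalgebra generated by $c$ and $d$. Your argument supplies that missing detail using exactly the machinery the paper sets up: $c,d\in N_{\mathrm{alt}}(U(\mathbb{S}))$ lets you rotate the associator so that the troublesome element $c^k d^\ell$ leaves the middle slot, and the vanishing of the derivation $D_{c,d}$ (checked on the four generators via Table~\ref{table1}, then propagated by the derivation property) together with $D_{a,b}=\mathrm{ad}_{[a,b]}-3[L_a,R_b]$ and $[c,d]=0$ gives the operator commutations $[L_c,R_c]=[L_d,R_c]=[L_d,R_d]=0$ that kill the resulting commutators $[R_c,L_c^kL_d^\ell]$ and $[R_d,L_d^{\ell-1}]$. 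All the individual steps check out, including the identification $c^{k+1}d^\ell = c\cdot(c^kd^\ell)$, which is immediate from the left\--tapped form of the PBW basis. This is a legitimate, self\--contained route to a statement the survey leaves as an assertion.
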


Since $b, c, d$ span a nilpotent Lie subalgebra $\mathbb{N} \subset
\mathbb{S}$, associativity gives $\lambda(b^j c^k d^\ell)$ $=$ $\lambda(b)^j
\lambda(c)^k \lambda(d)^\ell$ on $U(\mathbb{N})$; this fails on
$U(\mathbb{S})$.

\begin{lemma}
In $U(\mathbb{S})$ the operator $\lambda( b^j c^k d^\ell )$ equals
  \[
  \sum_{\alpha=0}^{\min(j,k)}
  \sum_{\beta=0}^\alpha
  (-1)^{\alpha-\beta}
  \alpha!
  \binom{\alpha}{\beta}
  \binom{j}{\alpha}
  \binom{k}{\alpha}
  S^{-\beta}
  \lambda(b)^{j-\alpha}
  \lambda(c)^{k-\alpha}
  M_d^\alpha
  \lambda(d)^\ell.
  \]
\end{lemma}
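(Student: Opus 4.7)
The plan is induction on $j$, with $k$ and $\ell$ arbitrary nonnegative integers. The base case $j = 0$ is immediate: the factor $\binom{j}{\alpha}$ kills all terms except $\alpha = \beta = 0$, leaving $\lambda(c)^k\lambda(d)^\ell = \lambda(c^k d^\ell)$ by the previous lemma.

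For the inductive step, write $b^{j+1}c^k d^\ell = b\cdot(b^j c^k d^\ell)$ as a left-tapped monomial and apply the third P\'erez-Izquierdo--Shestakov lemma with $a = b$ and $x = b^j c^k d^\ell$. Translating the four resulting terms via $\phi$ into operators on $P(\mathbb{S})$ and collecting yields the recursion
\[
\lambda(b^{j+1}c^k d^\ell) = \lambda(b)\,\lambda(b^j c^k d^\ell) + \bigl[\lambda(b)+\rho(b),\,\lambda(b^j c^k d^\ell)\bigr],
\]
splitting into a naive left-multiplication piece and a correction arising from nonassociativity. Substituting the inductive formula for $\lambda(b^j c^k d^\ell)$ into both pieces produces two double sums over $(\alpha,\beta)$ that must be re-expressed in the canonical shape $S^{-\beta'}\lambda(b)^{(j+1)-\alpha'}\lambda(c)^{k-\alpha'}M_d^{\alpha'}\lambda(d)^\ell$. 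The commutation identities in the preceding operator lemma show that $\lambda(b)+\rho(b) = M_b - (I+S^{-1})M_d D_c$ commutes with $\lambda(b)$, with $\lambda(d)$, with $M_d$, and with every $S^{\pm 1}$; the only nontrivial interaction is with $\lambda(c)^{k-\alpha}$. Using the identity $[E,F^n] = n[E,F]F^{n-1}$ (valid when $[[E,F],F] = 0$, as recorded just before the $\lambda(c^k d^\ell)$ lemma), the correction raises the index $\alpha$ to $\alpha+1$ and, via the factor $S^{-1}-I$ appearing in $[\lambda(b)+\rho(b),\lambda(c)] = (S^{-1}-I)M_d$, splits into a $\beta' = \beta$ piece and a $\beta' = \beta + 1$ piece.

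The main obstacle is combinatorial bookkeeping: at each canonical index $(\alpha',\beta')$ the coefficient receives a naive contribution (from $(\alpha,\beta) = (\alpha',\beta')$) and two correction contributions (from $(\alpha,\beta) = (\alpha'-1,\beta')$ and $(\alpha'-1,\beta'-1)$). Summing and simplifying relies on Pascal's rule $\binom{j}{\alpha'}+\binom{j}{\alpha'-1}=\binom{j+1}{\alpha'}$, together with the companion identities $(k-\alpha'+1)\binom{k}{\alpha'-1} = \alpha'\binom{k}{\alpha'}$ and $\binom{\alpha'-1}{\beta'-1}+\binom{\alpha'-1}{\beta'}=\binom{\alpha'}{\beta'}$. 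After these substitutions the expression collapses to the target coefficient $(-1)^{\alpha'-\beta'}\alpha'!\binom{\alpha'}{\beta'}\binom{j+1}{\alpha'}\binom{k}{\alpha'}$. No step is conceptually new, but the algebra is long and sign-sensitive, and the real risk is error in the shift and binomial bookkeeping; treating the naive and correction contributions separately before recombining keeps it tractable.
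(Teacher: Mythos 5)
Your proof is correct and follows essentially the route the paper indicates (the paper omits the details here, remarking only that the argument is inductive and similar to the worked example for $[\,b^n c^p d^q, a\,]$). I have checked the key ingredients: the recursion $\lambda(bx)=\lambda(b)\lambda(x)+[\lambda(b)+\rho(b),\lambda(x)]$ does follow from the third P\'erez-Izquierdo--Shestakov lemma, the operator identities $\lambda(b)+\rho(b)=M_b-(I+S^{-1})M_dD_c$ and $[\lambda(b)+\rho(b),\lambda(c)]=(S^{-1}-I)M_d$ agree with Table \ref{tableS}, and the three contributions at $(\alpha',\beta')$ do collapse via $(k-\alpha'+1)\binom{k}{\alpha'-1}=\alpha'\binom{k}{\alpha'}$ and Pascal's rule to the stated coefficient.
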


The inductive proof of this result is similar to, but much more complicated
than, the example given above.

Multiplication by the general basis monomial $a^i b^j c^k d^\ell$ is given by
the following formula.

\begin{lemma}
In $U(\mathbb{S})$ the operator $\lambda( a^i b^j c^k d^\ell )$ equals
  \allowdisplaybreaks
  \begin{align*}
  &
  \sum_{\alpha=0}^{\min(j,k)}
  \sum_{\beta=0}^{\alpha}
  \sum_{\gamma=0}^{i}
  \sum_{\delta=0}^{i-\gamma}
  \sum_{\epsilon=0}^{i-\gamma-\delta}
  (-1)^{i+\alpha-\beta-\gamma-\delta}
  \alpha! \delta! \epsilon!
  \binom{\alpha}{\beta}
  \binom{j}{\alpha,\epsilon}
  \binom{k}{\alpha,\delta}
  \times
  \\
  &
  X_i(\gamma,\delta,\epsilon)
  \lambda(a)^\gamma
  S^{-\beta-\delta-\epsilon}
  \lambda(b)^{j-\alpha-\epsilon}
  D_b^\delta
  D_c^\epsilon
  \lambda(c)^{k-\alpha-\delta}
  M_d^{\alpha+\delta+\epsilon}
  \lambda(d)^\ell,
  \end{align*}
where $X_i(\gamma,\delta,\epsilon)$ is a polynomial in $\alpha{-}\beta$
satisfying the recurrence
  \allowdisplaybreaks
  \begin{align*}
  X_{i+1}(\gamma,\delta,\epsilon)
  &=
  (\alpha{-}\beta{+}\delta{+}\epsilon)
  X_i(\gamma,\delta,\epsilon)
  \\
  &\quad
  +
  X_i(\gamma{-}1,\delta,\epsilon)
  +
  X_i(\gamma,\delta{-}1,\epsilon)
  +
  X_i(\gamma,\delta,\epsilon{-}1),
  \end{align*}
with the initial conditions $X_0(0,0,0) = 1$ and $X_i(\gamma,\delta,\epsilon) = 0$
unless $0 \le \gamma \le i$, $0 \le \delta \le i {-} \gamma$, $0 \le \epsilon \le i {-}
\gamma {-} \delta$.
\end{lemma}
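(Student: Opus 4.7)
The plan is to proceed by induction on $i$, the exponent of $a$. The base case $i=0$ is exactly the previous lemma, with the convention $X_0(0,0,0)=1$ and $X_0(\gamma,\delta,\epsilon)=0$ otherwise, so only the $\alpha,\beta,\epsilon,\delta=0$ summand survives and the general formula reduces correctly.

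For the inductive step, the key identity is the third preliminary lemma, which gives, for a basis monomial $y = a x$ with $a \in M$ and any $z$,
\[
y z = 2 a(xz) + [xz,a] - x(az) - x[z,a].
\]
Applying this with $y = a^{i+1} b^j c^k d^\ell$ and $x = a^i b^j c^k d^\ell$, and translating each term into the operator language on $P(\mathbb{S})$, yields the operator recursion
\[
\lambda(a^{i+1} b^j c^k d^\ell) \;=\; 2\,\lambda(a)\,\lambda(a^i b^j c^k d^\ell) + \rho(a)\,\lambda(a^i b^j c^k d^\ell) - \lambda(a^i b^j c^k d^\ell)\,\lambda(a) - \lambda(a^i b^j c^k d^\ell)\,\rho(a).
\]
Now I would substitute the explicit expressions $\lambda(a) = M_a$ and $\rho(a) = M_b D_b + M_c D_c - M_d D_d - 3 M_d D_b D_c$ from Table~\ref{tableS}, plug in the inductive form of $\lambda(a^i b^j c^k d^\ell)$, and push everything through to the ordered form used in the statement (namely, with $\lambda(a)^\gamma$ on the outside left, then the shifts $S^{-\beta-\delta-\epsilon}$, then $\lambda(b)$, $D_b^\delta$, $D_c^\epsilon$, $\lambda(c)$, $M_d^{\alpha+\delta+\epsilon}$, $\lambda(d)^\ell$).

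The commutation is done repeatedly via the third lemma of this section: $[D_x,M_x]=I$, all other $[D_x,M_y]=0$, $[M_a,S]=-S$, $[M_a,S^{-1}]=S^{-1}$, and $S,S^{-1}$ commute with the other $M_x, D_x$. Each of the four terms of the recursion contributes to the new coefficient $X_{i+1}$ in a transparent way: the $2 M_a \lambda(x)$ term produces an extra factor of $\lambda(a)$, accounting for the shift $X_i(\gamma-1,\delta,\epsilon)$; the combination $\rho(a)\lambda(x) - \lambda(x)\rho(a)$, after expansion, splits into three contributions corresponding to $M_b D_b + M_c D_c - M_d D_d$ and to the alternator piece $-3 M_d D_b D_c$. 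Commuting $D_b$ and $D_c$ past $\lambda(b)^{j-\alpha}$ and $\lambda(c)^{k-\alpha}$ produces the shifts $\delta\mapsto\delta-1$ and $\epsilon\mapsto\epsilon-1$, while commuting $S^{\pm 1}$ past $M_a$ (together with the scalar $\alpha-\beta$ absorbed from the existing $S^{-\beta}$ and the $\delta+\epsilon$ shifts created by $D_b, D_c$ landing on $\lambda(b),\lambda(c)$) produces the multiplicative coefficient $(\alpha-\beta+\delta+\epsilon)$.

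The main obstacle is the bookkeeping: every application of a $D_x$ to a $\lambda(x)$ generates a new shift operator $S^{\pm 1}$ and changes the exponent in $M_x$, so one must be meticulous in keeping all the factors $S^{-\beta-\delta-\epsilon}, M_d^{\alpha+\delta+\epsilon}$, and the binomial and factorial coefficients $\alpha!\,\delta!\,\epsilon!\binom{\alpha}{\beta}\binom{j}{\alpha,\epsilon}\binom{k}{\alpha,\delta}$ in the normal form above. After reindexing the summation variables $\gamma,\delta,\epsilon$ in the four pieces and collecting coefficients of the common operator monomial, the inductive hypothesis matches the claimed form precisely when $X_{i+1}$ satisfies the stated four-term recurrence; verifying this identification of coefficients is the crux of the proof, but it is purely mechanical once the commutation relations have been applied carefully.
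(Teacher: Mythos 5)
Your strategy is the one the paper intends: induction on $i$, with the base case $i=0$ reducing (via $X_0(0,0,0)=1$) to the preceding lemma for $\lambda(b^jc^kd^\ell)$, and with the inductive step driven by the operator form of the third preliminary lemma, $\lambda(ax) = 2\lambda(a)\lambda(x) + \rho(a)\lambda(x) - \lambda(x)\lambda(a) - \lambda(x)\rho(a)$ applied to $x = a^ib^jc^kd^\ell$. The survey itself gives no proof of this lemma --- it only remarks that the argument is a more complicated version of the displayed induction for $[b^nc^pd^q,a]$ and defers to \cite{BremnerHentzelPeresiUsefi} --- so identifying the correct recursion and base case is genuine progress.

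Nevertheless, as written the proposal is a plan rather than a proof: the entire content of the lemma is the precise coefficient $(-1)^{i+\alpha-\beta-\gamma-\delta}\alpha!\delta!\epsilon!\binom{\alpha}{\beta}\binom{j}{\alpha,\epsilon}\binom{k}{\alpha,\delta}$ together with the four-term recurrence for $X_i$, and that is exactly the part you assert to be ``purely mechanical'' without carrying it out. Several of your heuristic attributions are also imprecise in ways that a real computation must confront. The term $X_i(\gamma{-}1,\delta,\epsilon)$ enters the recurrence with coefficient $1$, not $2$, so it cannot come from the $2\lambda(a)\lambda(x)$ summand alone: one must normal-order $-\lambda(x)M_a$, moving $M_a$ to the far left, which cancels one copy of $M_a\lambda(x)$ and generates scalar corrections from $[M_a,S^{\pm1}] = \mp S^{\pm1}$. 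Moreover $M_a$ fails to commute with $\lambda(b)$, $\lambda(c)$ and $\lambda(d)$ (each contains shift operators), and from Table \ref{tableS} one finds $[D_b,\lambda(b)] = S$ and $[D_c,\lambda(c)] = S$ rather than $I$, so every such commutation injects an extra shift that must be reabsorbed into $S^{-\beta-\delta-\epsilon}$; your narrative does not account for these, nor for how the sign $(-1)^{i+\alpha-\beta-\gamma-\delta}$ changes parity differently along the four branches of the recurrence. Until the normal-ordering is actually performed and shown to reproduce the stated coefficients, the crux of the lemma remains unverified.
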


The unique solution to the recurrence in this lemma is
  \[
  X_i(\gamma,\delta,\epsilon)
  =
  \binom{\delta{+}\epsilon}{\epsilon}
  \sum_{\zeta=0}^{i{-}\gamma{-}\delta{-}\epsilon}
  \binom{i}{\gamma,\zeta}
  \left\{ \begin{matrix}
  i{-}\gamma{-}\zeta \\ \delta{+}\epsilon
  \end{matrix} \right\}
  (\alpha{-}\beta)^\zeta,
  \]
where the Stirling numbers of the second kind are defined by
  \[
  \left\{ \begin{matrix} r \\ s \end{matrix} \right\}
  =
  \frac{1}{s!}
  \sum_{t=0}^s
  (-1)^{s-t}
  \binom{s}{t}
  t^r.
  \]
We are almost ready to write down the structure constants for the universal
nonassociative enveloping algebra $U(\mathbb{S})$.  We need the differential
coefficients defined by
  \[
  \left[ \begin{matrix} r \\ 0 \end{matrix} \right]
  =
  1,
  \;
  \left[ \begin{matrix} r \\ s \end{matrix} \right]
  =
  r(r{-}1)\cdots(r{-}s{+}1),
  \;
  \text{so that}
  \;
  D_x^s ( x^r )
  =
  \left[ \begin{matrix} r \\ s \end{matrix} \right]
  x^{r-s}.
  \]

\begin{theorem}
\emph{(Bremner, Hentzel, Peresi and Usefi \cite{BremnerHentzelPeresiUsefi})} In
$U(\mathbb{S})$, the universal nonassociative enveloping algebra of the
4-dimensional Malcev algebra $\mathbb{S}$, the product of basis monomials
$( a^i b^j c^k d^\ell ) ( a^m b^n c^p d^q )$ equals
  \allowdisplaybreaks
  \begin{align*}
  &
  \sum_{\alpha=0}^{\min(j,k)}
  \sum_{\beta=0}^{\alpha}
  \sum_{\gamma=0}^{i}
  \sum_{\delta=0}^{i-\gamma}
  \sum_{\epsilon=0}^{i-\gamma-\delta}
  \sum_{\zeta=0}^{i{-}\gamma{-}\delta{-}\epsilon}
  \sum_{\eta=0}^{j-\alpha-\epsilon}
  \sum_{\theta=0}^{j-\alpha-\epsilon-\eta}
  \sum_{\lambda=0}^{k-\alpha-\delta}
  \sum_{\mu=0}^{k-\alpha-\delta-\lambda}
  \sum_{\nu=0}^m
  \\
  &
  (-1)^{i+j+k+\alpha-\beta-\gamma-\epsilon-\eta-\theta-\lambda}
  (\alpha{-}\beta)^\zeta
  \alpha!
  \binom{\alpha}{\beta}
  (\delta{+}\epsilon)!
  \omega^\nu
  \binom{i}{\gamma,\zeta}
  \times
  \\
  &
  \left\{ \begin{matrix}
  i{-}\gamma{-}\zeta \\ \delta{+}\epsilon
  \end{matrix} \right\}
  \binom{j}{\alpha,\epsilon,\eta,\theta}
  \binom{k}{\alpha,\delta,\lambda,\mu}
  \binom{m}{\nu}
  \left[
  \begin{matrix}
  n \\
  k{-}\alpha{-}\lambda
  \end{matrix}
  \right]
  \left[
  \begin{matrix}
  p{+}\lambda \\
  j{-}\alpha{-}\eta
  \end{matrix}
  \right]
  \times
  \\
  &
  a^{m+\gamma-\nu}
  b^{-k+n+\alpha+\eta+\lambda}
  c^{-j+p+\alpha+\eta+\lambda}
  d^{j+k+\ell+q-\alpha-\eta-\lambda},
  \end{align*}
where $\omega = j {+} k {-} \ell {-} 2\alpha {-} \beta {-} 2\delta {-}
2\epsilon {-} 2\theta {-} 2\mu$. (We make the convention that when $\alpha =
\beta$ and $\zeta = 0$ we set $(\alpha{-}\beta)^\zeta = 1$.)
\end{theorem}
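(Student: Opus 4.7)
The plan is to apply the operator $\lambda(a^i b^j c^k d^\ell)$ of the previous lemma to the basis monomial $a^m b^n c^p d^q$; since under $\phi$ the product in $U(\mathbb{S})$ corresponds to $\lambda(a^i b^j c^k d^\ell)(a^m b^n c^p d^q)$, this computes the structure constant. Substituting the closed form for $X_i(\gamma,\delta,\epsilon)$ in terms of Stirling numbers immediately contributes the $\zeta$-sum, the Stirling factor, the multinomial $\binom{i}{\gamma,\zeta}$, and the prefactor $(\alpha{-}\beta)^\zeta$, reducing the task to evaluating
\[
\lambda(a)^\gamma \, S^{-\beta-\delta-\epsilon} \, \lambda(b)^{j-\alpha-\epsilon} \, D_b^\delta \, D_c^\epsilon \, \lambda(c)^{k-\alpha-\delta} \, M_d^{\alpha+\delta+\epsilon} \, \lambda(d)^\ell
\]
on $a^m b^n c^p d^q$.

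Processing from right to left, I would use $\lambda(d)^\ell = S^{-\ell} M_d^\ell$ (valid since $[M_d, S] = 0$), so together with $M_d^{\alpha+\delta+\epsilon}$ the two innermost factors yield $(a{-}\ell)^m b^n c^p d^{q+\ell+\alpha+\delta+\epsilon}$. Next I would rewrite
\[
\lambda(c) = SM_c - S^{-1}M_d D_b - SM_d D_b, \qquad \lambda(b) = SM_b + S^{-1}M_d D_c - SM_d D_c
\]
from Table~\ref{tableS} as sums of three mutually commuting summands, then take a trinomial expansion of each of $\lambda(c)^{k-\alpha-\delta}$ and $\lambda(b)^{j-\alpha-\epsilon}$. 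Assigning $\lambda$ and $\eta$ to the counts of the $SM_c$ and $SM_b$ branches, and $\mu$ and $\theta$ to the $S^{-1}M_d D_b$ and $S^{-1}M_d D_c$ branches, the resulting trinomial coefficients multiply with $\binom{k}{\alpha,\delta}$ and $\binom{j}{\alpha,\epsilon}$ from the previous lemma to give the multinomials $\binom{k}{\alpha,\delta,\lambda,\mu}$ and $\binom{j}{\alpha,\epsilon,\eta,\theta}$ of the statement.

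Tracking the monomial data: the total $D_b$-count is $\delta + (k{-}\alpha{-}\delta{-}\lambda) = k{-}\alpha{-}\lambda$ and the total $M_b$-count is $\eta$, so the $b$-part becomes $\left[\begin{matrix}n \\ k{-}\alpha{-}\lambda\end{matrix}\right] b^{-k+n+\alpha+\eta+\lambda}$; symmetrically the $c$-part becomes $\left[\begin{matrix}p{+}\lambda \\ j{-}\alpha{-}\eta\end{matrix}\right] c^{-j+p+\alpha+\eta+\lambda}$, and every $M_d$ contribution accumulates to $d^{j+k+\ell+q-\alpha-\eta-\lambda}$. The algebraic sum of all $S$-shifts on $a^m$ produces $(a+\omega)^m$ with the stated $\omega = j+k-\ell-2\alpha-\beta-2\delta-2\epsilon-2\theta-2\mu$; binomial expansion followed by $\lambda(a)^\gamma = M_a^\gamma$ introduces the outer $\nu$-sum with $\binom{m}{\nu}\omega^\nu a^{m+\gamma-\nu}$. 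The minus signs from the trinomial expansions combine with the previous lemma's sign to give, modulo~$2$, the claimed $(-1)^{i+j+k+\alpha-\beta-\gamma-\epsilon-\eta-\theta-\lambda}$.

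The main obstacle is pure bookkeeping: one must verify that each summation index attaches to the correct trinomial branch (so that the shifts, signs, and multinomials come out exactly as stated) and that the interleaved multiplications and differentiations of $b$ and $c$ collapse, via $[D_b,M_b] = [D_c,M_c] = I$, into the falling factorials $\left[\begin{matrix}n \\ k{-}\alpha{-}\lambda\end{matrix}\right]$ and $\left[\begin{matrix}p{+}\lambda \\ j{-}\alpha{-}\eta\end{matrix}\right]$. No new conceptual ingredient is needed beyond the previous lemma and the closed form for $X_i$; the derivation is intricate but essentially mechanical.
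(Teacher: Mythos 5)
Your proposal is correct and follows exactly the route the paper sets up: apply the operator $\lambda(a^i b^j c^k d^\ell)$ from the preceding lemma (with the closed form for $X_i$ in terms of Stirling numbers) to the monomial $a^m b^n c^p d^q$, expand each of $\lambda(b)$ and $\lambda(c)$ trinomially into commuting summands, and collect shifts, derivatives and signs; your bookkeeping of the exponents, the value of $\omega$, and the sign $(-1)^{i+j+k+\alpha-\beta-\gamma-\epsilon-\eta-\theta-\lambda}$ all check out (noting that both minus-signed branches of $\lambda(c)$ contribute, so the net sign from that expansion is $(-1)^{k-\alpha-\delta-\lambda}$). This is essentially the same argument as in the cited source, so nothing further is needed.
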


It is not difficult to show that $U(\mathbb{S})$ is not alternative; in fact,
it is not even power-associative.  So it is interesting to determine the
largest alternative quotient of $U(\mathbb{S})$. The alternator ideal in a
nonassociative algebra is the ideal generated by all elements of the form
$(x,x,y)$ and $(y,x,x)$. Let $M$ be a Malcev algebra, $U(M)$ its universal
enveloping algebra, and $I(M) \subseteq U(M)$ the alternator ideal. The
universal alternative enveloping algebra of $M$ is $A(M) = U(M)/I(M)$.

\begin{lemma}
We have the following nonzero alternators in $U(\mathbb{S})$:
  \[
  ( a, bc, bc ) = 2 d^2,
  \qquad
  ( b, ac, ac ) = cd,
  \qquad
  ( c, ab, ab ) = - bd.
  \]
\end{lemma}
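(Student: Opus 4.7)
My plan is to verify each identity by a direct computation in $U(\mathbb{S})$, using the multiplication rules already established.  Since $a, b, c \in M \subseteq N_{\mathrm{alt}}(U(\mathbb{S}))$, the defining property of the generalized alternative nucleus gives $(r, x, y) = (x, y, r)$ for every $r \in \{a, b, c\}$ and all $x, y \in U(\mathbb{S})$.  I therefore rewrite each alternator in the cyclic form
$(r, pq, pq) = (pq, pq, r) = (pq)^2 \, r - (pq)\bigl((pq)\, r\bigr)$.
This is convenient because right multiplication by $r \in M$ can be expressed as $w r = r w + [w, r]$, and the commutator is either given by $\rho(r)$ in Table \ref{tableS} or (when $r = a$ and $w$ is a monomial in $b, c, d$) by the closed formula of Lemma \ref{bcdabracket}.

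For $(a, bc, bc) = (bc, bc, a)$ the ingredients are:
(i) $(bc)(bc) = b^2 c^2 - 2 bcd$, from the product formula for the nilpotent Lie subalgebra spanned by $\{b, c, d\}$;
(ii) $[bc, a] = 2 bc - 3 d$, $[b^2 c^2, a] = 4 b^2 c^2 - 12 bcd$ and $[bcd, a] = bcd - 3 d^2$, all from Lemma \ref{bcdabracket};
and (iii) the value of $\lambda(bc)$ on the cubic monomial $abc$, obtained either from the closed formula for $\lambda(b^j c^k d^\ell)$ at $(j, k, \ell) = (1, 1, 0)$ (it collapses to $\lambda(bc) = \lambda(b)\lambda(c) + (S^{-1} - I) M_d$) or by composing $\lambda(b)$ and $\lambda(c)$ from Table \ref{tableS} with the correction term $(S^{-1} - I) M_d$.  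Assembling these ingredients yields $(bc)^2 a - (bc)\bigl((bc) a\bigr) = 2 d^2$.

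The remaining two alternators are handled in parallel.  Since $\{a, b\}$ and $\{a, c\}$ each span a two-dimensional solvable Lie subalgebra, the first lemma of this section gives $(ab)(ab) = a^2 b^2 + a b^2$ and $(ac)(ac) = a^2 c^2 + a c^2$.  Right multiplication by $c$ and by $b$ is again computed from $\rho(c)$ and $\rho(b)$ in Table \ref{tableS}, and left multiplication by the non-generators $ab$ and $ac$ is expanded via the $(a x) z$ rule, $(ax) z = 2 a(xz) - x(az) - x[z, a] + [xz, a]$, reducing everything to iterated applications of $\lambda(a), \lambda(b), \lambda(c)$ from Table \ref{tableS}.

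The main obstacle in each case is that middle step: evaluating a left multiplication by a non-generator (for example $\lambda(bc)$ on the cubic monomial $abc$).  The combined action of the shift $S^{\pm 1}$, the derivations $D_b, D_c$, and the multiplication $M_d$ generates a long list of intermediate terms of degrees three and four in $a, b, c, d$, and almost all of them must cancel against the corresponding contributions from $(pq)^2 \, r$.  The final answer is only a small quadratic residue in $\{b, c, d\}$, so this extensive cancellation is a welcome internal consistency check on the bookkeeping.
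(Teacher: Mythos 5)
Your proposal is correct and is in essence the same argument as the paper's: a direct verification of each alternator from the multiplication rules already established for $U(\mathbb{S})$ (the paper simply evaluates $\big(a(bc)\big)(bc)$ and $a\big((bc)(bc)\big)$ via its structure-constant theorem and subtracts). Your cyclic rewriting $(r,pq,pq)=(pq,pq,r)$, justified by $r\in N_{\mathrm{alt}}(U(\mathbb{S}))$, and your reliance on the operator formulas for $\rho$, $\lambda$ and Lemma \ref{bcdabracket} instead of the master product formula are only a reorganization of the same computation, and your intermediate values (e.g.\ $(bc)(bc)=b^2c^2-2bcd$, $[bcd,a]=bcd-3d^2$, $\lambda(bc)=\lambda(b)\lambda(c)+(S^{-1}-I)M_d$) check out and do yield $2d^2$.
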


\begin{proof}
From the structure constants for $U(\mathbb{S})$ we obtain
  \[
  \big( a (bc) \big) (bc) = a b^2 c^2 - 2 abcd + 2 d^2,
  \quad
  a \big( (bc) (bc) \big) = a b^2 c^2 - 2 abcd,
  \]
which imply the first equation. The others are similar.
\end{proof}

Let $J \subseteq U(\mathbb{S})$ be the ideal generated by $\{ d^2, cd, bd \}$.
In $U(\mathbb{S})/J$ it suffices to consider two types of monomials: $a^i d$
and $a^i b^j c^k$. If $m$ is one of these, we write $m$ when we mean $m+J$ in
the next theorem.

\begin{theorem}
\emph{(Bremner, Hentzel, Peresi and Usefi \cite{BremnerHentzelPeresiUsefi})}
In $U(\mathbb{S})/J$ we have
  \allowdisplaybreaks
  \begin{align*}
  ( a^i d ) ( a^m d )
  &=
  0,
  \\
  ( a^i b^j c^k ) ( a^m d )
  &=
  \delta_{j0} \delta_{k0} \, a^{i+m} d,
  \\
  ( a^i d ) ( a^m b^n c^p )
  &=
  \delta_{n0} \delta_{p0} \, a^i (a{-}1)^m d,
  \\
  ( a^i b^j c^k ) ( a^m b^n c^p )
  &=
  a^i (a{+}j{+}k)^m b^{j+n} c^{k+p}
  +
  \delta_{j+n,1} \delta_{k+p,1}
  T^{im}_{jk},
  \end{align*}
where
  \[
  T^{im}_{jk}
  =
  \begin{cases}
  0
  &\text{if $(j,k) = (0,0)$},
  \\
  (a{-}1)^{i+m} d - a^i (a{+}1)^m d
  &\text{if $(j,k) = (1,0)$},
  \\
  - (a{-}1)^{i+m} d - a^i (a{+}1)^m d
  &\text{if $(j,k) = (0,1)$},
  \\
  a^i (a{-}1)^m d - a^i (a{+}2)^m d
  &\text{if $(j,k) = (1,1)$}.
  \end{cases}
  \]
\end{theorem}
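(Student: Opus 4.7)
The plan is to derive each of the four identities by specializing the multi-sum product formula of the previous theorem (for $(a^i b^j c^k d^\ell)(a^m b^n c^p d^q)$) and reducing modulo $J$. The key preliminary observation is that among the PBW basis monomials $a^i b^j c^k d^\ell$ of $U(\mathbb{S})$, only those with $\ell = 0$, or with $\ell = 1$ and $j = k = 0$, survive modulo $J$: indeed $[b,d] = [c,d] = 0$ in $\mathbb{S}$ gives $db, dc \in J$, so once a factor of $d$ appears, any adjacent $b$ or $c$ produces an element of $J$.

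For the three cases involving an explicit $d$, I would first apply Lemma 7 to get $(a^i d)(a^m d) = a^i(a{-}1)^m d^2$, which lies in $J$, proving the first identity. For $(a^i b^j c^k)(a^m d)$ and $(a^i d)(a^m b^n c^p)$ I would substitute the corresponding special values into the main product formula and examine the exponents $j{+}k{+}q{-}\alpha{-}\eta{-}\lambda$, $-k{+}n{+}\alpha{+}\eta{+}\lambda$, and $-j{+}p{+}\alpha{+}\eta{+}\lambda$ (of $d$, $b$, $c$ respectively). A surviving summand must have $d$-exponent at most $1$, and when this exponent equals $1$, the $b$- and $c$-exponents must vanish. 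A short analysis shows that these constraints force $\alpha{+}\eta{+}\lambda = j{+}k$ together with $j = k = 0$ (respectively $n = p = 0$), and the formula then collapses to the stated Kronecker-delta expression.

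For the main case $(a^i b^j c^k)(a^m b^n c^p)$ I would set $\ell = q = 0$ in the main formula; a typical summand then has $d$-exponent $j{+}k{-}\alpha{-}\eta{-}\lambda$, and exactly two ranges contribute modulo $J$. In the $d$-free range, the bounds $\alpha \le \min(j,k)$, $\eta \le j{-}\alpha{-}\epsilon$, $\lambda \le k{-}\alpha{-}\delta$ combined with $\alpha{+}\eta{+}\lambda = j{+}k$ force $\alpha = \delta = \epsilon = \theta = \mu = 0$ and $\eta = j, \lambda = k$; the Stirling-number factor and the convention $(\alpha{-}\beta)^\zeta = 0^\zeta$ then force $\gamma = i, \zeta = 0$, and the residual sum over $\nu$ collapses by the binomial theorem to $a^i(a{+}j{+}k)^m b^{j+n} c^{k+p}$. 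In the single-$d$ range, $\alpha{+}\eta{+}\lambda = j{+}k{-}1$ combined with vanishing $b$- and $c$-exponents yields $j{+}n = k{+}p = 1$, which immediately excludes the case $(j,k) = (0,0)$ and matches $T^{im}_{00} = 0$.

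The main obstacle is the case-by-case simplification of the single-$d$ contribution for $(j,k) \in \{(1,0),(0,1),(1,1)\}$. The sub-cases $(1,0)$ and $(0,1)$ each admit a single residual branch and produce two summands: one from the shift $[d,a] = -d$ giving $\pm (a{-}1)^{i+m} d$, and one from the scalar shift $\omega = j{+}k = 1$ giving $\mp a^i(a{+}1)^m d$. The sub-case $(1,1)$ is the most delicate: $\alpha{+}\eta{+}\lambda = 1$ now admits three sub-branches $(\alpha,\eta,\lambda) \in \{(1,0,0),(0,1,0),(0,0,1)\}$, and after summing over the residual indices $\beta,\delta,\epsilon,\mu,\nu$ one must obtain exactly $a^i(a{-}1)^m d - a^i(a{+}2)^m d$, with $\omega = 2$ accounting for the shift $+2$. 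An alternative and probably cleaner route would bypass the main formula entirely and proceed by direct induction on $m$ in the quotient $U(\mathbb{S})/J$ via the left-multiplication lemma, exploiting the drastically simpler relations $d^2 = bd = cd = 0$ available there.
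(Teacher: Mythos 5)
The survey states this theorem without proof (it is quoted from \cite{BremnerHentzelPeresiUsefi}), but your plan --- specialize the general product formula for $U(\mathbb{S})$, discard every monomial containing $d^2$, $bd$ or $cd$, and do casework on the $d$-exponent $j+k+\ell+q-\alpha-\eta-\lambda$ --- is exactly the derivation the surrounding text sets up, and your constraint analysis is correct (in particular $\alpha+\eta+\lambda\le j+k-\alpha$ forces $\alpha=\delta=\epsilon=0$, $\eta=j$, $\lambda=k$ in the $d$-free range, and the single-$d$ range forces $j+n=k+p=1$). One small imprecision: in the $(1,0)$ and $(0,1)$ cases the term $(a-1)^{i+m}d$ does not come from a single residual branch but from summing the branches $\gamma=0,\dots,i$ (together with $\epsilon\in\{0,1\}$) via the binomial identity $\sum_{\gamma}(-1)^{i-\gamma}\binom{i}{\gamma}a^{\gamma}(a-1)^m=(a-1)^{i+m}$; your fallback of a direct induction in $U(\mathbb{S})/J$ using the left-multiplication lemma is also viable and arguably cleaner.
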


It can be shown by direct calculation that the associator in $U(\mathbb{S})/J$
is an alternating function of its arguments. It follows that $J$ equals the
alternator ideal $I(\mathbb{S})$, and that $U(\mathbb{S})/J$ is isomorphic to
the universal alternative enveloping algebra $A(\mathbb{S})$. The first proof
of this result was given by Shestakov \cite{Shestakov1, Shestakov2} using different
methods. Since $I(\mathbb{S})$ contains no elements of degree 1, the natural
mapping from $\mathbb{S}$ to $A(\mathbb{S})$ is injective, and hence
$\mathbb{S}$ is special. This also follows directly from the isomorphism
$\mathbb{S} \cong \mathbb{A}^-$ where $\mathbb{A}$ is the 4-dimensional
alternative algebra in Table \ref{table2}.

  \begin{table}[ht]
  \begin{center}
  \begin{tabular}{r|rrrr}
  $\cdot$ & $\phantom{-}a$ & $\phantom{-}b$ & $\phantom{-}c$ & $\phantom{-}d$ \\
  \hline
  &&&& \\[-10pt]
  $a$   & $ a$ & $ 0$ & $ 0$ & $ d$ \\
  $b$   & $ b$ & $ 0$ & $ d$ & $ 0$ \\
  $c$   & $ c$ & $-d$ & $ 0$ & $ 0$ \\
  $d$   & $ 0$ & $ 0$ & $ 0$ & $ 0$
  \end{tabular}
  \end{center}
  \caption{The 4-dimensional alternative algebra $\mathbb{A}$ for which
  $\mathbb{A}^- = \mathbb{S}$}
  \label{table2}
  \end{table}

\section{Malcev algebras of dimensions 5, 6 and 7}

The 5-dimensional Malcev algebras were classified by Kuzmin \cite{Kuzmin2}:
there is one nilpotent algebra, a one-parameter family of solvable
(non-nilpotent) algebras, and one non-solvable algebra.  A recent preprint by
Gavrilov \cite{Gavrilov} gives a new approach to the classification of the
solvable algebras. The structure constants of the nilpotent Malcev algebra
$\mathbb{T}$ are given in Table \ref{table3}.

\begin{theorem}
\emph{(Bremner and Usefi \cite{BremnerUsefi})}
In $U(\mathbb{T})$, the universal nonassociative enveloping algebra of the
5-dimensional nilpotent Malcev algebra $\mathbb{T}$, the product of basis
monomials $( a^i b^j c^k d^\ell e^m ) ( a^p b^q c^r d^s e^t )$ equals
  \allowdisplaybreaks
  \begin{align*}
  &
  \sum_{\alpha=0}^\ell
  \sum_{\beta=0}^i
  \sum_{\gamma=0}^\beta
  \sum_{\delta=0}^\gamma
  \sum_{\epsilon=0}^{j-\alpha-\delta}
  \sum_{\zeta=0}^{j-\alpha-\delta-\epsilon}
  \sum_{\eta=0}^{\ell-\alpha-(\gamma-\delta)}
  \sum_{\theta=0}^{\ell-\alpha-(\gamma-\delta)-\eta}
  \sum_{\lambda=0}^\eta
  \\
  &
  (-1)^{\beta+\zeta+\ell-\alpha-\gamma-\eta}
  \frac{\alpha!\beta!\lambda!}{2^{\alpha+\gamma}3^{j-\epsilon-\zeta+\ell-\alpha-\eta-\theta}}
  \,\times
  \\
  &
  \binom{\alpha}{\beta{-}\gamma}
  \binom{i}{\beta}
  \binom{j}{\alpha,\delta,\epsilon,\zeta}
  \binom{j{-}\alpha{-}\epsilon{-}\zeta}{\lambda}
  \binom{\ell}{\alpha,\gamma{-}\delta,\eta,\theta}
  \binom{\eta}{\lambda}
  \,\times
  \\
  &
  \begin{bmatrix}
  p \\ j{-}\beta{-}\epsilon{+}\ell{-}\alpha{-}\eta{-}\theta
  \end{bmatrix}
  \begin{bmatrix}
  q \\ \ell{-}\alpha{-}\eta{-}\theta
  \end{bmatrix}
  \begin{bmatrix}
  r \\ \theta
  \end{bmatrix}
  \begin{bmatrix}
  s \\ j{-}\alpha{-}\epsilon{-}\zeta{-}\lambda
  \end{bmatrix}
  \,\times
  \\[6pt]
  &
  a^{i{+}p{-}j{+}\epsilon{-}\ell{+}\alpha{+}\eta{+}\theta}
  b^{\epsilon{+}q{-}\ell{+}\alpha{+}\eta{+}\theta}
  c^{\zeta{+}k{+}r{-}\theta}
  d^{\eta{+}s{+}\alpha{+}\epsilon{+}\zeta{-}j}
  e^{j{-}\alpha{-}\epsilon{-}\zeta{+}\ell{-}\eta{+}m{+}t}.
  \end{align*}
\end{theorem}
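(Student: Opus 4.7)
The plan is to follow the same inductive template that produced the product formula for $U(\mathbb{S})$, but carried out for the additional generator and the more complicated non-Lie structure of $\mathbb{T}$. First I would record the structure constants of $\mathbb{T}$ from Table \ref{table3} and identify the Lie subalgebras; in any nilpotent Malcev algebra of dimension $5$ there are large associative subalgebras of $U(\mathbb{T})$ inherited from Lie subalgebras, and on those the multiplication formula collapses to ordinary PBW-style binomial identities (giving the $\begin{bmatrix} \cdot \\ \cdot \end{bmatrix}$ falling-factorial factors and the multinomial coefficients). Within these Lie pieces one obtains closed forms analogous to the two explicit lemmas stated for $\mathbb{S}$.

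Next I would compute the operators $\rho(x)$ and $\lambda(x)$ for each of the five generators $x\in\{a,b,c,d,e\}$ as differential operators on $P(\mathbb{T})$, using the three lemmas of P\'erez-Izquierdo and Shestakov together with the derivation identity $6(b,x,a)=-2D_{a,b}(x)-2[x,[a,b]]$. This yields a small table of operators expressible in terms of $M_x$, $D_x$, appropriate shift operators, and (for the genuinely non-Lie generators) correction terms containing factors of $\tfrac13$ and $\tfrac12$; these are exactly the sources of the prefactor $1/(2^{\alpha+\gamma}3^{\,j-\epsilon-\zeta+\ell-\alpha-\eta-\theta})$ in the final formula.

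With generators in hand, I would then build up $\lambda(a^ib^jc^kd^\ell e^m)$ by induction on the total degree, pulling one generator off at a time and applying the left-multiplication lemma. Each step introduces at most one new summation index coming either from a binomial expansion (when a shift operator crosses a monomial) or from a Leibniz expansion (when a derivation $D_{a,b}$ acts on a product). This is where the nine summation indices $\alpha,\beta,\gamma,\delta,\epsilon,\zeta,\eta,\theta,\lambda$ are born; one proves an auxiliary lemma giving $\lambda$ of partial monomials (analogous to the $\lambda(b^jc^kd^\ell)$ lemma for $\mathbb{S}$), then proves a recurrence in $i$ whose unique solution contains the Stirling-number-type combinatorial coefficients. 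Finally one composes $\lambda(a^ib^jc^kd^\ell e^m)$ with the monomial $a^pb^qc^rd^se^t$, which introduces the last index $\nu$-analog (here absorbed into the $p,q,r,s$ falling factorials) and yields the displayed expression.

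The main obstacle is the combinatorial bookkeeping: verifying that the many nested sums arising from iterated applications of the left-multiplication and right-bracket lemmas really collapse into the single displayed closed form, with exactly the stated signs, factorials, and exponents of $a,b,c,d,e$. Specifically, the hardest step will be identifying the correct recurrence satisfied by the coefficients of the intermediate monomials and solving it, in analogy with the Stirling-number solution $X_i(\gamma,\delta,\epsilon)$ used in the $\mathbb{S}$ case; once this recurrence is pinned down, the rest of the proof is routine (if tedious) induction. Because the formula is asserted as an equality of explicit polynomials in the PBW basis, the verification can in principle be cross-checked on low-degree monomials by direct application of Lemmas~1--3, which is a sensible sanity check before writing out the full inductive argument.
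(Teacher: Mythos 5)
Your strategy is the right one, and it is in fact the strategy the survey describes for $U(\mathbb{S})$ and the one used in the cited Bremner--Usefi paper: exploit the Lie subalgebras of $\mathbb{T}$ (here $\{a,b,c,e\}$ and $\{b,c,d,e\}$ span nilpotent Lie subalgebras, so only products mixing $a,b$ with $d$ see the non-Lie corrections), compute $\rho(x)$ and $\lambda(x)$ for the five generators via the right-bracket and left-multiplication lemmas together with the derivations $D_{a,b}$, build up $\lambda(a^i b^j c^k d^\ell e^m)$ by induction peeling off one generator at a time, and finally apply the resulting differential operator to $a^p b^q c^r d^s e^t$. So there is no methodological disagreement with the source.

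The problem is that what you have written is a plan, not a proof, and for this particular theorem the plan \emph{is} the easy part while the formula \emph{is} the theorem. Every piece of content in the statement --- the nine nested summation ranges, the sign $(-1)^{\beta+\zeta+\ell-\alpha-\gamma-\eta}$, the denominator $2^{\alpha+\gamma}3^{j-\epsilon-\zeta+\ell-\alpha-\eta-\theta}$, the particular multinomials and falling factorials, and the exponents of $a,b,c,d,e$ --- must come out of computations you have not performed: you never write down the actual operators $\lambda(a),\dots,\lambda(e)$ for $\mathbb{T}$, never state the intermediate closed forms for $\lambda$ of partial monomials, never exhibit the recurrence whose solution produces the coefficients, and never carry out the final composition. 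As it stands the argument could not distinguish the displayed formula from any other expression of the same general shape (indeed, your reference to a ``last index $\nu$-analog'' does not correspond to anything in the stated formula, whose indices stop at $\lambda$). To close the gap you would need, at minimum, the analogues of the paper's Table \ref{tableS} and of the $\lambda(b^j c^k d^\ell)$ and $\lambda(a^i b^j c^k d^\ell)$ lemmas, each proved by the kind of explicit induction illustrated in Lemma \ref{bcdabracket}, followed by the term-by-term evaluation on the second monomial; the suggested low-degree sanity checks are a reasonable safeguard but are not a substitute for that derivation.
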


  \begin{table}[ht]
  \begin{center}
  \begin{tabular}{r|rrrrr}
  $[-{,}-]$ &\phantom{-} $\phantom{-}a$ & $\phantom{-}b$
  & $\phantom{-}c$ & $\phantom{-}d$ & $\phantom{-}e$
  \\
  \hline
  &&&&& \\[-10pt]
  $a$   & $ 0$ & $ c$ & $ 0$ & $ 0$ & $ 0$ \\
  $b$   & $-c$ & $ 0$ & $ 0$ & $ 0$ & $ 0$ \\
  $c$   & $ 0$ & $ 0$ & $ 0$ & $ e$ & $ 0$ \\
  $d$   & $ 0$ & $ 0$ & $-e$ & $ 0$ & $ 0$ \\
  $e$   & $ 0$ & $ 0$ & $ 0$ & $ 0$ & $ 0$
  \end{tabular}
  \end{center}
  \caption{The 5-dimensional nilpotent Malcev algebra $\mathbb{T}$}
  \label{table3}
  \end{table}

\begin{lemma}
We have the following nonzero alternators in $U(\mathbb{T})$:
  \[
  (ab,ab,d) = -\tfrac16 ce,
  \qquad
  (bd,bd,a^2) = \tfrac{1}{18} e^2.
  \]
\end{lemma}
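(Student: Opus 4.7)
My plan is to compute both associators by direct reduction to the PBW basis $\{a^i b^j c^k d^\ell e^m\}$ of $U(\mathbb{T})$, using the three multiplication lemmas of Section 3 together with the fact that the only nonzero brackets of $\mathbb{T}$ are $[a,b] = c$ and $[c,d] = e$. One could alternatively specialize the explicit structure-constant formula from the preceding theorem to the appropriate exponent tuples, but working directly from the lemmas is more transparent.

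For the first associator $(ab, ab, d) = ((ab)(ab))d - (ab)((ab)d)$, I would first compute $(ab)(ab)$ in PBW form by applying the third lemma $yz = 2a(xz) - x(az) - x[z,a] + [xz,a]$ with $y = z = ab$, using the right-bracket lemma for each commutator $[-,a]$ that appears and the second lemma $a(by) = b(ay) + [a,b] y + \cdots$ to reorder each factor into PBW form. Next I would compute $(ab)d$; since $a, b \in M \subseteq N_{\mathrm{alt}}(U(\mathbb{T}))$, the identity $6(b, x, a) = -2 D_{a,b}(x) - 2[x, [a,b]]$ applied to $x = d$ gives the associator $(a, b, d) = \tfrac{1}{6} e$, whence $(ab)d = abd + \tfrac{1}{6} e$. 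I would then reduce $((ab)(ab))d$ and $(ab)((ab)d) = (ab)(abd) + \tfrac{1}{6}(ab)e$ to PBW form by the same procedure and subtract; extensive cancellation in the higher-degree monomials should leave exactly $-\tfrac{1}{6} ce$.

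For the second associator $(bd, bd, a^2)$ the same plan applies, with $(bd)(bd)$, $(bd)(a^2)$, $((bd)(bd))(a^2)$, and $(bd)((bd)(a^2))$ each reduced to PBW form one commutator or reordering at a time. Here the fact that $(a,a,x)=0$ for $a \in M$ lets me replace $a^2 x$ by $a(ax)$ whenever convenient, simplifying the left-multiplication chains. Because the chain $[a,b]=c$ followed by $[c,d]=e$ propagates through the reductions, all five basis elements appear in intermediate steps, and monomials of total degree up to six must be tracked.

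The main obstacle is bookkeeping: each intermediate product expands into dozens of PBW monomials, and every reduction introduces potential sign or coefficient errors. In practice I would implement the multiplication lemmas as operators on $P(\mathbb{T})$ in a computer algebra system, feed the four intermediate products through the reduction procedure, and verify the final answers $-\tfrac{1}{6}ce$ and $\tfrac{1}{18}e^2$ by an independent check against the explicit product formula of the preceding theorem specialized to the exponent tuples $(i,j,k,\ell,m),(p,q,r,s,t)$ needed for each associator.
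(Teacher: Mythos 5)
Your proposal is correct and matches the paper's approach: the paper (which proves the analogous lemma for $U(\mathbb{S})$ by reading off products from the explicit structure constants, and leaves the $U(\mathbb{T})$ case to the cited computation) likewise reduces each triple product to the PBW basis and subtracts, and your intermediate checks, e.g.\ $(a,b,d)=\tfrac16 e$ via the nuclear-associator identity, are right. The only caveat is that your write-up is a computation plan rather than the completed bookkeeping, but the plan is sound and your proposed cross-check against the specialized product formula is exactly the verification the paper relies on.
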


A basis of the ideal $J$ generated by $\{ ce, e^2 \}$ consists of the monomials
  \[
  \{ a^i b^j c^k d^\ell e^m \,|\, m \ge 2 \}
  \cup
  \{ a^i b^j c^k d^\ell e \,|\, k \ge 1 \}.
  \]

\begin{theorem}
\emph{(Bremner and Usefi \cite{BremnerUsefi})}
The algebra $U(\mathbb{T})/J$ is alternative; we have
  \allowdisplaybreaks
  \begin{align*}
  &
  ( a^i b^j d^\ell e ) ( a^p b^q d^s e )
  =
  0,
  \\
  &
  ( a^i b^j c^k d^\ell ) ( a^p b^q d^s e )
  =
  \delta_{k0}
  a^{i+p} b^{j+q} d^{\ell+s} e,
  \\
  &
  ( a^i b^j d^\ell e ) ( a^p b^q c^r d^s )
  =
  \delta_{r0}
  a^{i+p} b^{j+q} d^{\ell+s} e,
  \\
  &
  ( a^i b^j c^k d^\ell ) ( a^p b^q c^r d^s )
  =
  \sum_{\mu=0}^j
  (-1)^\mu
  \mu!
  \binom{j}{\mu}
  \binom{p}{\mu}
  a^{i+p-\mu}
  b^{j+q-\mu}
  c^{k+r+\mu}
  d^{\ell+s}
  \\
  &\quad
  +
  \delta_{k0} \delta_{r0}
  \left(
    \tfrac16 i j s
  {-} \tfrac16 i \ell q
  {+} \tfrac12 j \ell p
  {+} \tfrac13 j p s
  {-} \tfrac13 \ell p q
  \right)
  a^{i+p-1} b^{j+q-1} d^{\ell+s-1} e
  \\
  &\quad
  -
  \delta_{k0} \delta_{r1} \ell a^{i+p} b^{j+q} d^{\ell+s-1} e.
  \end{align*}
\end{theorem}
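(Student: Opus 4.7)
The plan is to derive the four product formulas by specializing the general structure-constant formula from the previous theorem and reducing modulo $J$, and then to verify alternativity of the quotient directly from the resulting multiplication table.

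First I would identify the quotient basis. The stated basis of $J$ implies that modulo $J$ every monomial of $U(\mathbb{T})$ reduces to one of the two shapes $a^ib^jc^kd^\ell$ (with no $e$ factor) or $a^ib^jd^\ell e$ (with one $e$ factor and no $c$ factor, because $ce\in J$). Every product of basis monomials therefore falls into exactly one of four cases, indexed by the $e$-exponents $(m,t)\in\{0,1\}^2$ of the two factors.

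Next I would substitute each of the four choices of $(m,t)$ into the tenfold sum of the preceding theorem and discard every summand whose output monomial carries total $e$-degree $\ge 2$ or simultaneously carries positive $c$- and $e$-degree. The case $(1,1)$ vanishes because every surviving summand produces $e^2$. The mixed cases $(1,0)$ and $(0,1)$ collapse to a single Kronecker-delta contribution, since any term in which the second monomial contributes a $c$ or $b$ that must couple with the $e$ of the first produces $ce$ or $e^2$ and dies. The hard case is $(0,0)$, where the surviving terms split into a nilpotent main part of the shape given by the $b,c,d$-subalgebra lemma (recall that $\{a,b,c\}$ and $\{c,d,e\}$ each span Heisenberg Lie subalgebras of $\mathbb{T}$), together with correction terms of $e$-degree exactly $1$ that survive only when $k+r\le 1$; tracking the surviving summation indices produces precisely the $\delta_{k0}\delta_{r0}$-coefficient $\tfrac16 ijs-\tfrac16 i\ell q+\tfrac12 j\ell p+\tfrac13 jps-\tfrac13\ell pq$ and the $\delta_{k0}\delta_{r1}$-coefficient $-\ell$.

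For alternativity I would then use the four explicit formulas just obtained to compute $(x,x,y)$ and $(y,x,x)$ for $x,y$ ranging over the two basis families of $U(\mathbb{T})/J$ and check that every such associator vanishes in the quotient. Combined with the preceding lemma that exhibits $ce$ and $e^2$ (up to nonzero scalars) as honest alternators in $U(\mathbb{T})$ and hence $J\subseteq I(\mathbb{T})$, this gives equality $J=I(\mathbb{T})$ and identifies $U(\mathbb{T})/J$ with $A(\mathbb{T})$.

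The main obstacle is the combinatorial bookkeeping needed to extract the four specialized formulas from the tenfold sum of multinomial and Stirling-type coefficients; the signs and compatibility constraints on the summation indices must be matched exactly. A cleaner alternative, which I would favor in writing the proof, is to rederive each of the four formulas inductively from the right-bracket and left-multiplication lemmas of Section~3 while working directly in $U(\mathbb{T})/J$: imposing $ce=0$ and $e^2=0$ from the outset dramatically shortens the induction, isolates the small correction terms at exactly the places where the derivations $D_{a,b}$ produce a stray $e$, and makes the alternativity check on the quotient transparent.
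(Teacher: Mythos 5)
Your overall strategy coincides with the one the paper follows (and merely cites to Bremner--Usefi): obtain the four product formulas by reducing the general tenfold structure-constant formula for $U(\mathbb{T})$ modulo $J$, split into the four cases indexed by the $e$-exponents of the two factors, and then verify alternativity of the quotient; combined with the lemma exhibiting $ce$ and $e^2$ as alternators, this yields $J=I(\mathbb{T})$ and $U(\mathbb{T})/J\cong A(\mathbb{T})$. Your case analysis is sound: with $(m,t)=(1,1)$ every output monomial has $e$-degree $\ge 2$, the mixed cases collapse to a single Kronecker-delta term, and the $(0,0)$ case splits into the Heisenberg-type main sum plus $e$-degree-one corrections.

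One step, as you state it, would not suffice: checking that $(x,x,y)$ and $(y,x,x)$ vanish for $x,y$ ranging over \emph{basis} monomials does not establish alternativity, because the alternator is quadratic in $x$ and so does not extend by linearity from basis elements. You must instead verify the linearized identities, i.e.\ that the trilinear associator $(x,y,z)$ is an alternating function of its arguments on all triples of basis monomials (equivalently $(x,y,z)+(y,x,z)=0$ and $(x,y,z)+(x,z,y)=0$ on basis triples); since the characteristic is not $2$, this multilinear check, which does pass to linear combinations, is what the paper performs in the analogous $4$-dimensional case. With that correction, and granting the substantial combinatorial bookkeeping you acknowledge, the argument is complete; your suggested alternative of rerunning the inductions of Section~3 directly in $U(\mathbb{T})/J$ with $ce=e^2=0$ imposed from the outset is a legitimate and genuinely shorter route to the same four formulas.
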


The remaining 5-dimensional algebras are an infinite family of solvable
non-nilpotent algebras and a non-solvable algebra.  Bremner and Tvalavadze
are studying the universal enveloping algebras for these algebras.  Before discussing
the non-solvable algebra, we will describe the 7-dimensional simple Malcev algebra,
which is closely related to the 8-dimensional alternative algebra of octonions.

The 8-dimensional alternative division algebra $\mathbb{O}$ of real octonions
has basis $1$, $i$, $j$, $k$, $l$, $m$, $n$, $p$ with structure constants obtained
from the Cayley-Dickson doubling process. We replace the product in $\mathbb{O}$
by the commutator $[x,y] = xy - yx$ and denote the resulting Malcev algebra by
$\mathbb{O}^-$. We have the direct sum of ideals $\mathbb{O}^- = \mathbb{R}
\oplus \mathbb{M}$, where $\mathbb{R}$ denotes the span of 1, and $\mathbb{M}$
denotes the span of the other basis elements. The subalgebra $\mathbb{M}$ is a
simple (non-Lie) Malcev algebra with structure constants in Table \ref{table4}.
We complexify $\mathbb{M}$ and introduce a new basis where $\epsilon^2 = -1$:
  \allowdisplaybreaks
  \begin{alignat*}{4}
  h &= \epsilon l,
  &\quad
  x &= \tfrac12( i - \epsilon m ),
  &\quad
  y &= \tfrac12 ( k - \epsilon p ),
  &\quad
  z &= \tfrac12 ( j - \epsilon n ),
  \\
  &&
  x' &= -\tfrac12 ( i + \epsilon m ),
  &\quad
  y' &= -\tfrac12 ( k + \epsilon p ),
  &\quad
  z' &= -\tfrac12 ( j + \epsilon n ).
  \end{alignat*}
For this new basis we have structure constants in Table \ref{table5}.

  \begin{table}[ht]
  \[
  \begin{array}{c|rrrrrrr}
  [-,-] & i & j & k & l & m & n & p \\ \hline
  &&&&&&& \\[-10pt]
  i &   0 & -2k &  2j &  2m & -2l & -2p &  2n \\
  j & -2k &   0 &  2i &  2n &  2p & -2l & -2m \\
  k &  2j & -2i &   0 &  2p & -2n &  2m & -2l \\
  l & -2m & -2n & -2p &   0 &  2i &  2j &  2k \\
  m &  2l & -2p &  2n & -2i &   0 & -2k &  2j \\
  n &  2p &  2l & -2m & -2j &  2k &   0 & -2i \\
  p & -2n &  2m &  2l & -2k & -2j &  2i &   0
  \end{array}
  \]
  \caption{The 7-dimensional simple Malcev algebra $\mathbb{M}$ (non-split form)}
  \label{table4}
  \[
  \begin{array}{c|rrrrrrr}
  [-,-] & h & x & y & z & x' & y' & z' \\ \hline
  &&&&&&& \\[-10pt]
  h  &   0  &  2x  & 2y   &  2z  & -2x' & -2y' & -2z' \\
  x  & -2x  &   0  & 2z'  & -2y' &   h  &   0  &   0  \\
  y  & -2y  & -2z' &  0   &  2x' &   0  &   h  &   0  \\
  z  & -2z  &  2y' & -2x' &   0  &   0  &   0  &   h  \\
  x' &  2x' &  -h  &  0   &   0  &   0  & -2z  &  2y  \\
  y' &  2y' &   0  & -h   &   0  &  2z  &   0  & -2x  \\
  z' &  2z' &   0  &  0   &  -h  & -2y  &  2x  &   0
  \end{array}
  \]
  \caption{The 7-dimensional simple Malcev algebra $\mathbb{M}$ (split form)}
  \label{table5}
  \end{table}

We observe that the subalgebra $\mathbb{L}$ of $\mathbb{M}$ with basis
$\{ h, x, x' \}$ is isomorphic to the simple Lie algebra $\mathfrak{sl}_2(\mathbb{C})$:
$[ h, x ] = 2x$, $[ h, x' ] = -2x'$, $[ x, x' ] = h$. The subspace $\mathbb{V}$
(resp.~$\mathbb{W}$) with basis $\{ y, z' \}$ (resp.~$\{ z, y' \}$) has trivial product
$[ y, z' ] = 0$ (resp.~$[ z, y' ] = 0$) and is an $\mathbb{L}$-module:
  \allowdisplaybreaks
  \begin{alignat*}{4}
  [ h, y ] &= 2y,
  &\qquad
  [ h, z' ] &= -2z',
  &\qquad
  [ h, z ] &= 2z,
  &\qquad
  [ h, y' ] &= -2y',
  \\
  [ x, y ] &= 2z',
  &\qquad
  [ x, z' ] &= 0,
  &\qquad
  [ x, z ] &= -2y',
  &\qquad
  [ x, y' ] &= 0,
  \\
  [ x', y ] &= 0,
  &\qquad
  [ x', z' ] &= 2y,
  &\qquad
  [ x', z ] &= 0,
  &\qquad
  [ x', y' ] &= -2z.
  \end{alignat*}
We have $\mathbb{M} = \mathbb{L} \oplus \mathbb{V} \oplus \mathbb{W}$: the
direct sum of three $\mathbb{L}$-modules. The $\mathbb{L}$-modules $\mathbb{V}$
and $\mathbb{W}$ are isomorphic by $y \leftrightarrow -z$, $z' \leftrightarrow
y'$; these are 2-dimensional irreducible representations of $\mathbb{L}$ which
are not isomorphic to the natural representation of
$\mathfrak{sl}_2(\mathbb{C})$. This is possible because $\mathbb{V}$ and
$\mathbb{W}$ are Malcev modules which are not Lie modules. (Carlsson
\cite{Carlsson} has shown that this is the only case in which a simple Lie
algebra has a Malcev module which is not a Lie module. Elduque and Shestakov
\cite{ElduqueShestakov} determined all irreducible non-Lie modules for Malcev
algebras without any restriction on the dimension.)  The subalgebras $\mathbb{L}
\oplus \mathbb{V}$ and $\mathbb{L} \oplus \mathbb{W}$ are isomorphic to the
5-dimensional non-solvable (non-Lie) Malcev algebra.

The paper of Kuzmin \cite{Kuzmin2} also classifies the 6-dimensional nilpotent
Malcev algebras.  It would be interesting to extend this to a classification of
all 6-dimensional Malcev algebras, and to study their enveloping algebras.

\section{Universal property of the octonions} \label{shestakovproof}

The goal of this research program is to study the universal nonassociative enveloping
algebra of the 7-dimensional simple Malcev algebra. Elduque \cite{Elduque} showed
that this Malcev algebra does not have a 6-dimensional subalgebra, so we have to
jump directly from the 5-dimensional non-solvable algebra to the 7-dimensional simple
algebra. The universal nonassociative enveloping algebra is linearly isomorphic to the
polynomial algebra in 7 variables, hence infinite dimensional.  But the universal
alternative enveloping algebra is the octonion algebra; the following proof was
explained to us by Shestakov.

\begin{theorem} \label{U(M)=O}
The universal alternative enveloping algebra $U(\mathbb{M})$ of the 7-di\-men\-sion\-al
simple Malcev algebra $\mathbb{M}$ over $\mathbb{R}$ is isomorphic to the division
algebra $\mathbb{O}$ of real octonions: $U(\mathbb{M}) \cong \mathbb{O}$.
\end{theorem}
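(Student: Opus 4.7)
The plan is to construct a surjective unital alternative algebra homomorphism $\phi\colon A(\mathbb{M})\to\mathbb{O}$ and then establish $\dim_{\mathbb{R}} A(\mathbb{M}) \le 8$, which forces $\phi$ to be an isomorphism. Since $\mathbb{O}$ is alternative and $\mathbb{M}$ embeds in $\mathbb{O}^{-}$ as the trace-zero octonions, the universal property of $A(\mathbb{M})$ produces a unique alternative algebra map $\phi$ extending the identity on $\mathbb{M}$. This map is surjective: for any nonzero $a\in\mathbb{M}$ we have $\phi(a^{2})=a^{2}=-n(a)\cdot 1\ne 0$ in $\mathbb{O}$ (where $n$ is the octonion norm), so $1$ lies in the image of $\phi$; combined with $\iota(\mathbb{M})\subseteq\phi(A(\mathbb{M}))$ and the fact that $\mathbb{O}$ is generated as a unital alternative algebra by $\mathbb{M}\cup\{1\}$, surjectivity follows.

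For injectivity it suffices to show that the subspace $V:=\mathbb{R}\cdot 1+\mathbb{M}\subseteq A(\mathbb{M})$ is closed under multiplication: since $V$ contains the generators $\{1\}\cup\mathbb{M}$ of $A(\mathbb{M})$, closure forces $A(\mathbb{M})=V$, and hence $\dim A(\mathbb{M})\le 8=\dim\mathbb{O}$. Writing $ab=\tfrac{1}{2}[a,b]+\tfrac{1}{2}(ab+ba)$ with $[a,b]\in\mathbb{M}$, closure reduces to showing $ab+ba\in\mathbb{R}\cdot 1$ for all $a,b\in\mathbb{M}$, which by polarization is equivalent to the quadratic identity
\[
a^{2}=-n(a)\cdot 1 \quad \text{in } A(\mathbb{M}), \text{ for every } a\in\mathbb{M};
\]
the value of the scalar on the right is then forced to be $-n(a)$ by the surjection $\phi$ once we know that $a^{2}\in\mathbb{R}\cdot 1$.

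The crux of the proof, and the main obstacle, is establishing this quadratic identity directly inside $A(\mathbb{M})$ rather than merely in its homomorphic image $\mathbb{O}$. Because $A(\mathbb{M})$ is alternative, the unital subalgebra generated by any single $a\in\mathbb{M}$ is associative and commutative, so $a^{2}$ is well-defined, but a priori need not be scalar. My approach would be to apply the alternativity identities to associators $(a,b,c)$ with $a,b,c\in\mathbb{M}\subseteq N_{\mathrm{alt}}(A(\mathbb{M}))$ to show first that $a^{2}$ lies in the nucleus of $A(\mathbb{M})$ and commutes with every element of $\mathbb{M}$, hence is central; then the simplicity of $\mathbb{M}$ together with its nondegenerate invariant bilinear form $n$ identifies this central scalar as $-n(a)\cdot 1$. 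A parallel computational route would be to exhibit explicit alternators in $U(\mathbb{M})$ analogous to those found for $\mathbb{S}$ and $\mathbb{T}$ in the earlier sections, and to verify that the alternator ideal $I(\mathbb{M})$ cuts $U(\mathbb{M})$ down to dimension exactly $8$, using the split-basis structure constants from Table~\ref{table5}. Either way, the heart of the argument is forcing the infinite-dimensional polynomial algebra $U(\mathbb{M})$ to collapse to $8$ dimensions through the alternator quotient; once this is done, everything else follows formally from universality and surjectivity.
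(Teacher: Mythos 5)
Your overall reduction is the right one and matches the shape of Shestakov's argument: a surjection $A(\mathbb{M})\to\mathbb{O}$ from universality, plus a dimension bound obtained by showing that $a\circ b$ is ``scalar'' for all $a,b\in\mathbb{M}$, so that the span of $\mathbb{M}$ and the scalars is closed under multiplication. But you have left the crux --- the quadratic identity --- as a plan rather than a proof, and the plan you sketch does not close the gap. First, \emph{central} is not the same as \emph{scalar}: in the nonassociative envelope $U(\mathbb{M})$ the element $c=e_1^2+\cdots+e_7^2$ is central and the center is the whole polynomial ring $F[c]$, so an argument that only establishes centrality of $a^2$ cannot by itself conclude $a^2\in\mathbb{R}\cdot 1$; your appeal to ``the simplicity of $\mathbb{M}$ and the nondegenerate form $n$'' already presupposes that $a^2$ is a scalar, which is the point at issue. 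Second, the route to centrality is essentially circular: in an alternative algebra $[a^2,b]=a\circ[a,b]$, and since $\mathrm{ad}_{e_i}$ maps $\mathbb{M}$ onto the span of the $e_k$ with $k\ne i$, proving that $e_i^2$ commutes with $\mathbb{M}$ is \emph{equivalent} to proving $e_i\circ e_k=0$ for $k\ne i$ --- the very statement you need.

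What actually does the work in the paper is a pair of alternative-algebra identities applied to the distinguished basis $e_1,\dots,e_7$ of $\mathbb{M}$ realized as the traceless octonions: the identity $(x,y,z)=\tfrac16 J(x,y,z)$, which computes the associators of the $e_i$ purely from the Malcev structure constants, and the identity $[x,y]\circ(x,y,z)=0$, which --- evaluated on suitable triples and then fully linearized --- yields $e_i\circ e_j=0$ for $i\ne j$ and $e_i^2=e_j^2=:a$ for all $i,j$. One then verifies directly that $\mathbb{R}a+\sum_i\mathbb{R}e_i$ is a subalgebra with unit $-a$, $a^2=-a$, and the octonion structure constants; note that this sidesteps any comparison of $a$ with the ambient unit of $A(\mathbb{M})$, which your formulation $a^2=-n(a)\cdot 1$ would additionally require. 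Your ``parallel computational route'' (computing the alternator ideal $I(\mathbb{M})$ explicitly) is closer to the dimension-count proof indicated in the paper's remark via $\widetilde{S}(\mathbb{M})$, but you do not carry it out either. As it stands, the proposal is a correct outline with the decisive step missing.
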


\begin{proof}
The octonion algebra $\mathbb{O}$ has a basis $\{ 1, e_1, \hdots, e_7 \}$ for
which the multiplication is completely determined by the following conditions
for $1 \le i \ne j \le 7$ with the convention that $e_{i+7} = e_i$; see Kuzmin and
Shestakov \cite{KuzminShestakov}, page 217:
  \begin{equation} \label{star}
  e_i^2 = -1,
  \quad
  e_i e_j = - e_j e_i,
  \quad
  e_i e_{i+1} = e_{i+3},
  \quad
  e_{i+1} e_{i+3} = e_i,
  \quad
  e_{i+3} e_i = e_{i+1}.
  \end{equation}
In the Malcev algebra $\mathbb{O}^-$, the subalgebra of traceless (or pure imaginary)
octonions has basis $\{ e_1, \hdots, e_7 \}$; its multiplication is completely determined
by the following conditions:
  \begin{equation} \label{eq2}
  [ e_i, e_{i+1} ] = 2 e_{i+3},
  \qquad
  [ e_{i+1}, e_{i+3} ] = 2 e_i,
  \qquad
  [ e_{i+3}, e_i ] = 2 e_{i+1}.
  \end{equation}
The 7-dimensional simple Malcev algebra $\mathbb{M}$ is isomorphic to the Malcev
algebra of traceless octonions; see Kuzmin \cite{Kuzmin1}, Theorem 10. Therefore,
$\mathbb{M}$ has basis $\{ e_1, \hdots, e_7 \}$ with the structure constants \eqref{eq2}.

Assume that $\mathbb{M}$ is embedded in an alternative algebra $\mathbb{A}$ in
which multiplication is denoted by juxtaposition, and consider the injective Malcev
algebra homomorphism sending $e_i \in \mathbb{M} \to e_i \in \mathbb{A}^-$.
The following identity holds in $\mathbb{A}$ since it holds in every alternative
algebra as can be easily shown by direct expansion:
  \begin{equation} \label{associator}
  (x,y,z) = \tfrac16 J(x,y,z),
  \qquad
  J (x,y,z) = [[x,y],z] + [[y,z],x] + [[z,x],y].
  \end{equation}
Using \eqref{associator} and the structure constants \eqref{eq2} we obtain
  \begin{align}
  ( e_{i+1}, e_{i+2}, e_{i+5} ) &= \tfrac16 J( e_{i+1}, e_{i+2}, e_{i+5} ) = 2 e_i,
  \notag
  \\
 ( e_{i+4}, e_{i+1}, e_{i+6} ) &= \tfrac16 J( e_{i+4}, e_{i+1}, e_{i+6} ) = 2 e_i,
 \label{eq3}
  \\
  ( e_{i+1}, e_{i+3}, e_{i+2} ) &= \tfrac16 J( e_{i+1}, e_{i+3}, e_{i+2} ) = 2 e_{i+6}.
  \notag
  \end{align}
Every alternative algebra satisfies the polynomial identity
  \begin{equation} \label{identity1}
  [x,y] \circ (x,y,z) = 0,
  \end{equation}
where $\circ$ denotes the product $x \circ y = xy + yx$; see Zhevlakov et al.
\cite{Zhevlakov}, page 35 and Lemma 9, page 145.  From this identity, using
\eqref{eq2} and \eqref{eq3}, we obtain
  \begin{equation}
  e_i \circ e_{i+4} = 0,
  \qquad
  e_i \circ e_{i+2} = 0,
  \qquad
  e_i \circ e_{i+6} = 0.
  \end{equation}
Therefore (recalling that $e_{i+7} = e_i$) we have $e_i \circ e_j = 0$ for all $i \ne j$,
or equivalently
  \begin{equation} \label{eq5}
  e_i e_j = - e_j e_i.
  \end{equation}
From \eqref{eq2} and \eqref{eq5} it follows that
  \begin{equation} \label{starstar}
  e_i e_{i+1} = e_{i+3},
  \qquad
  e_{i+1} e_{i+3} = e_i,
  \qquad
  e_{i+3} e_i = e_{i+1}.
  \end{equation}
Therefore for all $i \ne j$ we have $e_i e_j = \pm e_k$ for some $k$.
Linearizing \eqref{identity1} gives
  \begin{equation} \label{identity1linear}
  [x,y] \circ (r,s,z) +
  [r,y] \circ (x,s,z) +
  [x,s] \circ (r,y,z) +
  [r,s] \circ (x,y,z) = 0.
  \end{equation}
We set $x = e_{i+2}$, $y = e_{i+6}$, $r = e_{i+1}$, $s = e_{i+2}$, $z = e_{i+5}$. On
the left side of \eqref{identity1linear} the two middle terms vanish, and we obtain
  \begin{equation} \label{short}
  [e_{i+2},e_{i+6}] \circ (e_{i+1},e_{i+2},e_{i+5})
  +
  [e_{i+1},e_{i+2}] \circ (e_{i+2},e_{i+6},e_{i+5})
  = 0.
  \end{equation}
Now \eqref{eq2}, \eqref{eq3} and \eqref{short} give
  \begin{equation}
  2 e_i \circ 2 e_i - 2 e_{i+4} \circ 2 e_{i+4} = 0,
  \quad \text{hence} \quad
  8 e_i^2 - 8 e_{i+4}^2 = 0.
  \end{equation}
Therefore $e_i^2 = e_{i+4}^2$ for all $i$, and hence $e_i^2 = e_j^2$ for all $i, j$;
we denote the common value by $a$.

Let $\mathbb{A}_0 \subseteq \mathbb{A}$ be the subalgebra generated by
$e_1, \hdots, e_7$; then $\mathbb{A}_0$ is also generated by $e_1, e_2, e_3$
since (8) gives
$e_1 e_2 = e_4$, $e_2 e_3 = e_5$, $e_3 e_4 = e_6$, $e_4 e_5 = e_7$.
Now consider the following subspace of $\mathbb{A}_0$:
  \begin{equation}
  \mathbb{A}'_0 = \mathbb{R} a + \sum_{i=1}^7 \mathbb{R} e_i. \nonumber
  \end{equation}
We claim that $\mathbb{A}'_0$ is a subalgebra of $\mathbb{A}_0$ and that
$-a$ is the identity element of this subalgebra.  The proof is in three parts.
\emph{Part 1:} We have already seen that $e_i^2 = a$ for all $i$. By (8) we have
$e_i e_{i+1} = e_{i+3}$. Multiplying this equation on the left by $e_i$ we obtain
$e_i^2 e_{i+1} = e_i e_{i+3}$. But equations (7) and (8) imply that
$e_i e_{i+3} = - e_{i+1}$, and so $a e_{i+1} = - e_{i+1}$. In a similar manner,
we have $e_{i+1} a = - e_{i+1}$.
\emph{Part 2:} For $a^2$, we use $e_i^2 = a$ for all $i$ and Part 1 to obtain
$a^2 = a e_i^2 = (a e_i)e_i = (-e_i) e_i = - e_i^2$. Hence $a^2 = -a$.
\emph{Part 3:} We have already observed that for all $i \ne j$ we have
$e_i e_j = \pm e_k$ for some $k$.
This completes the proof of the claim.

Since $\mathbb{A}'_0$ is also generated by $e_1, e_2, e_3$ we see that
$\mathbb{A}_0 = \mathbb{A}'_0$. It is now clear that $\mathbb{A}_0$ and $\mathbb{O}$
are isomorphic over $\mathbb{R}$ since they have the same structure constants.
\end{proof}

\begin{remark}
Theorem \ref{U(M)=O} also follows easily from Example 2 in Shestakov
\cite{Shestakov1}. In fact, in the notation of \cite{Shestakov1} we have
  \[
  8
  \le
  \dim U(\mathbb{M})
  =
  \dim \mathrm{gr} \, U(\mathbb{M})
  \le
  \dim \widetilde{S}(\mathbb{M})
  =
  8,
  \]
hence $\dim U(\mathbb{M}) = 8$ and $U(\mathbb{M}) \cong \mathbb{O}$. (We thank
the referee for pointing this out.)
\end{remark}

Shestakov and Zhelyabin \cite{ShestakovZhelyabin} determined the nucleus $N$
(elements which associate with all elements) and center $Z$ (elements which commute
and associate with all elements) for the universal nonassociative enveloping
algebras over a field $F$ of characteristic 0 of the 4-dimensional solvable
algebra $\mathbb{S}$, the 5-dimensional nilpotent algebra $\mathbb{T}$, the
5-dimensional non-solvable algebra $\mathbb{U}$, and the 7-dimensional simple
algebra $\mathbb{M}$:
  \allowdisplaybreaks
  \begin{alignat*}{4}
  N(U(\mathbb{S})) &= F[d],
  &\quad
  N(U(\mathbb{T})) &= F[c,e],
  &\quad
  N(U(\mathbb{U})) &= F,
  &\quad
  N(U(\mathbb{M})) &= F[c],
  \\
  Z(U(\mathbb{S})) &= F,
  &\quad
  Z(U(\mathbb{T})) &= F[e],
  &\quad
  Z(U(\mathbb{U})) &= F,
  &\quad
  Z(U(\mathbb{M})) &= F[c].
  \end{alignat*}
For $U(\mathbb{M})$, we set $c = e_1^2 + \cdots + e_7^2$, so that $N(U(\mathbb{M})) = Z(U(\mathbb{M})) = F[c]$, which is isomorphic to the polynomial ring in one variable.

\section{Bol algebras}

In an associative algebra, the commutator $[a,b]$ satisfies the Jacobi identity, and
the associator $(a,b,c)$ is identically zero.  In an alternative algebra, $[a,b]$ satisfies
the Malcev identity, and $(a,b,c)$ is an alternating function of its arguments; moreover,
the associator can be expressed in terms of the commutator:
  \[
  (a,b,c) = \tfrac16 \big( [[a,b],c] + [[b,c],a] + [[c,a],b] \big).
  \]
The next step is right alternative algebras, which satisfy only $(b,a,a) = 0$.
In this case we need both operations, commutator and associator. In fact, it is
more convenient to consider not the usual associator but rather the Jordan
associator:
  \[
  \langle a, b, c \rangle = ( a \circ b ) \circ c - a \circ ( b \circ c ),
  \qquad
  a \circ b = \tfrac12 ( ab + ba ).
  \]
In any right alternative algebra, $[a,b]$ and $[a,b,c] = \langle b, c, a
\rangle$ (note the cyclic shift) satisfy the following identities, which define
the variety of Bol algebras:
  \allowdisplaybreaks
  \begin{align*}
  &
  [a,a] = 0,
  \qquad
  [a,a,c] = 0,
  \qquad
  [a,b,c] + [b,c,a] + [c,a,b] = 0,
  \\
  &
  [a,b,[c,d,e]] - [[a,b,c],d,e] - [c,[a,b,d],e] - [c,d,[a,b,e]] = 0,
  \\
  &
  [[a,b],[c,d]] - [a,b,[c,d]] + [c,d,[a,b]] + [[a,b,c],d] - [[a,b,d],c] = 0.
  \end{align*}
That is, a Bol algebra is a Lie triple system with respect to the operation $[a,b,c]$,
and also has an anticommutative operation $[a,b]$; the two are related by the last
identity above.  Any Lie triple system becomes a Bol algebra by defining $[a,b] = 0$
for all $a, b$.  Universal nonassociative enveloping algebras for Bol algebras have
been obtained by P\'erez-Izquierdo \cite{PerezIzquierdo1}. The structure constants
have not been worked out explicitly even for the 2-dimensional simple Lie triple
system with basis $e, f$ and relations $[e,f,e] = 2 e$ and $[e,f,f] = -2 f$.

\section{Beyond Bol algebras}

To go further we need to recall the origins of Lie, Malcev and Bol algebras in
differential geometry.  A Lie algebra is the natural algebraic structure on the
tangent space at the identity element of a Lie group. Similarly, Malcev
(resp.~Bol) algebras arise as tangent algebras of analytic Moufang (resp.~Bol)
loops. A Lie algebra determines a local Lie group uniquely up to isomorphism,
and similarly a Malcev (resp.~Bol) algebra determines a local Moufang
(resp.~Bol) loop uniquely up to isomorphism. In general, one may consider any
analytic loop and the operations induced by its multiplicative structure on the
tangent space at the identity.  The resulting algebraic structure was described
from the geometric point of view by Mikheev and Sabinin \cite{MikheevSabinin},
and was clarified from the algebraic point of view by Shestakov and Umirbaev
\cite{ShestakovUmirbaev} and P\'erez-Izquierdo \cite{PerezIzquierdo2}.  These
algebraic structures are now called Sabinin algebras.

In the general case, a Sabinin algebra has an infinite sequence of operations;
from the algebraic point of view these operations correspond to the elements of
the free nonassociative algebra which are primitive in the sense of
(nonassociative) Hopf algebras.  Following this approach, we see that Lie,
Malcev and Bol algebras are the Sabinin algebras corresponding to associative,
alternative, and right alternative algebras.  Beyond right alternative
algebras, the next weaker form of associativity leads to third-power
associative algebras defined by the identity $(a,a,a) = 0$; these algebras
correspond to monoassociative loops, defined by the identity $a^2 a = a
a^2$. In order to determine a local monoassociative loop uniquely up to
isomorphism, the corresponding tangent algebra requires not only a binary
operation (the commutator) and a ternary operation (the associator), but also a
quaternary operation, called a quaternator by Akivis and Goldberg
\cite{AkivisGoldberg}.

These binary-ternary-quaternary structures have been studied from the geometric
point of view by Akivis and Shelekhov \cite{AkivisShelekhov}, Shelekhov
\cite{Shelekhov}, and Mikheev \cite{Mikheev}. We will say a few words about
them from an algebraic point of view. Let $A$ be the free nonassociative
algebra on a set $X$ of generators. Define a coproduct $\Delta\colon A \to A
\otimes A$ by setting $\Delta(x) = x \otimes 1 + 1 \otimes x$ for $x \in X$ and
extending to all of $A$ by assuming that $\Delta$ is an algebra morphism:
$\Delta(fg) = \Delta(f) \Delta(g)$. The multilinear operations on $A$
correspond to the polynomials $f$ which are primitive in the Hopf algebra
sense: $\Delta(f) = f \otimes 1 + 1 \otimes f$. Shestakov and Umirbaev
\cite{ShestakovUmirbaev} gave a complete set of these primitive elements.  In
degrees 2 and 3 there are only the commutator and associator, but in degree 4
there are two quaternary operations,
  \allowdisplaybreaks
  \begin{align*}
  f &= \langle a, b, c, d \rangle = (ab,c,d) - a(b,c,d) - (a,c,d)b,
  \\
  g &= \{ a, b, c, d \} = (a,bc,d) - b(a,c,d) - (a,b,d)c,
  \end{align*}
which measure the deviation of the associators $(-,c,d)$ and $(a,-,d)$ from
being derivations. In degree 4, we also have six elements (called Akivis elements)
that are generated by the operations of lower degree (the commutator and associator):
  \[
  [[[a,b],c],d], \;\;
  [(a,b,c),d], \;\;
  [[a,b],[c,d]], \;\;
  ([a,b],c,d), \;\;
  (a,[b,c],d), \;\;
  (a,b,[c,d]).
  \]
Every primitive multilinear nonassociative polynomial of degree 4 is a linear
combination of permutations of these six Akivis elements and the two non-Akivis
elements $f$ and $g$. At this point we have two quaternary operations, $f$ and
$g$, but we have not yet imposed third-power associativity.  If we assume
$(a,a,a) = 0$ then $g$ is a consequence of $f$, in the sense that $g$ is a linear
combination of permutations of $f$, the Akivis elements, and the consequences
in degree 4 of the linearization of the third-power associative identity $(a,a,a) = 0$:
  \begin{align*}
  ([a,d],b,c) + ([a,d],c,b) + (b,[a,d],c) + (b,c,[a,d]) + (c,[a,d],b) + (c,b,[a,d]) &= 0,
  \\
  [(a,b,c),d] + [(a,c,b),d] + [(b,a,c),d] + [(b,c,a),d] + [(c,a,b),d] + [(c,b,a),d] &=
  0.
  \end{align*}
The next step is to find the polynomial identities relating these three
operations: $[a,b]$, $(a,b,c)$, $\langle a,b,c,d \rangle$. These identities
will define a variety of nonassociative multioperator algebras generalizing Bol
algebras.

\section*{Acknowledgements}

We thank the organizers of the Second Mile High Conference on Nonassociative
Mathematics for a very stimulating meeting. We are especially grateful to Ivan
Shestakov for the proof that $U(\mathbb{M}) \cong \mathbb{O}$.  Bremner,
Tvalavadze and Usefi were partially supported by NSERC. Peresi was partially
supported by CNPq.


\begin{thebibliography}{99}

\bibitem{AkivisGoldberg}
  \textsc{M. A. Akivis and V. V. Goldberg}:
  Local algebras of a differentiable quasigroup.
  \emph{Bull. Amer. Math. Soc.}
  43 (2006) 207--226.

\bibitem{AkivisShelekhov}
  \textsc{M. A. Akivis and A. M. Shelekhov}:
  Fourth-order alternators of a local analytic loop and three-webs of
  multidimensional surfaces.
  \emph{Soviet Math.} 33 (1989) 13--18.

\bibitem{BremnerHentzelPeresiUsefi}
  \textsc{M. R. Bremner, I. R. Hentzel, L. A. Peresi and H. Usefi}:
  Universal enveloping algebras of the four-dimensional Malcev algebra.
  \emph{Contemporary Mathematics} 483 (2009) 73--89.

\bibitem{BremnerUsefi}
  \textsc{M. R. Bremner and H. Usefi}:
  Enveloping algebras of the nilpotent Malcev algebra of dimension five.
  \emph{Algebras and Representation Theory}
  (to appear).

\bibitem{Carlsson}
  \textsc{R. Carlsson}:
  Malcev-Moduln.
  \emph{J. Reine Angew. Math.}
  281 (1976) 199--210.

\bibitem{Elduque}
  \textsc{A. Elduque}:
  On maximal subalgebras of central simple Malcev algebras.
  \emph{J. Algebra}
  103 (1986) 216--227.

\bibitem{ElduqueShestakov}
  \textsc{A. Elduque and I. P. Shestakov}:
  Irreducible non-Lie modules for Malcev superalgebras.
  \emph{J. Algebra}
  173 (1995)  622–-637.

\bibitem{Gavrilov}
  \textsc{A. V. Gavrilov}:
  Malcev extensions.
  Preprint, 2009.

\bibitem{Kuzmin1}
  \textsc{E. N. Kuzmin}:
  Malcev algebras and their representations.
  \emph{Algebra Logika}
  7 (1968) 48--69.

\bibitem{Kuzmin2}
  \textsc{E. N. Kuzmin}:
  Malcev algebras of dimension five over a field of characteristic zero.
  \emph{Algebra Logika}
  9 (1971) 691--700.

\bibitem{KuzminShestakov}
  \textsc{E. N. Kuzmin and I. P. Shestakov}:
  Nonassociative structures.
  Pages 197--280 of
  \emph{Algebra VI},
  Encyclopaedia of Mathematical Sciences 57, Springer, Berlin, 1995.

\bibitem{Malcev}
  \textsc{A. I. Malcev}:
  Analytic loops.
  \emph{Mat{.} Sbornik N.S.}
  36/78 (1955) 569--576.

\bibitem{Mikheev}
  \textsc{P. O. Mikheev}:
  On a problem of Chern-Akivis-Shelekhov on hexagonal three-webs.
  \emph{Aequationes Math.}
  51 (1996) 1--11.

\bibitem{MikheevSabinin}
  \textsc{P. O. Mikheev and L. V. Sabinin}:
  Infinitesimal theory of local analytic loops.
  \emph{Soviet Math. Dokl.}
  36 (1988) 545--548.

\bibitem{MorandiPerezIzquierdoPumplun}
  \textsc{P. J. Morandi, J.-M. P\'erez-Izquierdo and S. Pumpl\"un}:
  On the tensor product of composition algebras.
  \emph{J. Algebra}
  243 (2001) 41--68.

\bibitem{PerezIzquierdo1}
  \textsc{J. M. P\'erez-Izquierdo}:
  An envelope for Bol algebras.
  \emph{J. Algebra}
  284 (2005) 480--493.

\bibitem{PerezIzquierdo2}
  \textsc{J. M. P\'erez-Izquierdo}:
  Algebras, hyperalgebras, nonassociative bialgebras and loops.
  \emph{Advances Math.}
  208 (2007) 834--876.

\bibitem{PerezIzquierdoShestakov}
  \textsc{J.-M. P\'erez-Izquierdo and I. P. Shestakov}:
  An envelope for Malcev algebras.
  \emph{J. Algebra}
  272 (2004) 379--393.

\bibitem{Sagle}
  \textsc{A. A. Sagle}:
  Malcev algebras.
  \emph{Trans{.} Amer{.} Math{.} Soc{.}}
  101 (1961) 426--458.

\bibitem{Shelekhov}
  \textsc{A. M. Shelekhov}:
  Classification of multidimensional three-webs according to closure conditions.
  \emph{J. Soviet Math.}
  55 (1991) 2140--2168.

\bibitem{Shestakov1}
  \textsc{I. P. Shestakov}:
  Speciality problem for Malcev algebras and Poisson Malcev algebras.
  Pages 365--371 of
  \emph{Nonassociative Algebra and its Applications},
  Dekker, New York, 2000.

\bibitem{Shestakov2}
  \textsc{I. P. Shestakov}:
  Speciality and deformations of algebras.
  Pages 345--356 of
  \emph{Algebra},
  de Gruyter, Berlin, 2000.

\bibitem{ShestakovUmirbaev}
  \textsc{I. P. Shestakov and U. U. Umirbaev}:
  Free Akivis algebras, primitive elements, and hyperalgebras.
  \emph{J. Algebra}
  250 (2002) 533--548.

\bibitem{ShestakovZhelyabin}
  \textsc{I. P. Shestakov and V. N. Zhelyabin}:
  The Chevalley and Kostant theorems for Malcev algebras.
  \emph{Algebra Logika}
  46 (2007) 560--584.

\bibitem{Zhevlakov}
  \textsc{K. A. Zhevlakov, A. M. Slinko, I. P. Shestakov and A. I. Shirshov}:
  \emph{Rings That Are Nearly Associative}.
  Academic Press, New York, 1982.

\end{thebibliography}
\end{document}